\newtheorem{defin}{Definition}
\newtheorem{prop}{Proposition}
\newtheorem{lemma}{Lemma}
\newcommand{\cB}{\mathcal{B}}
\newcommand{\cC}{\mathcal{C}}
\newcommand{\cD}{\mathcal{D}}
\newcommand{\cE}{\mathcal{E}}
\newcommand{\cM}{\mathcal{M}}
\newcommand{\cP}{\mathcal{P}}
\newcommand{\cQ}{\mathcal{Q}}
\newcommand{\R}{{\rm I}\kern-0.18em{\rm R}}
\newcommand{\h}{{\rm I}\kern-0.18em{\rm H}}
\newcommand{\PP}{{\rm I}\kern-0.18em{\rm P}}
\newcommand{\E}{{\rm I}\kern-0.18em{\rm E}}
\newcommand{\Z}{{\rm Z}\kern-0.18em{\rm Z}}
\newcommand{\1}{{\rm 1}\kern-0.24em{\rm I}}
\newcommand{\N}{{\rm I}\kern-0.18em{\rm N}}
\newcommand*{\ep}{\varepsilon}
\newcommand*{\defeq}{:=}
\newcommand*{\dd}{\,\mathrm{d}}
\newtheorem{theorem}{Theorem}
\newtheorem{cor}{Corollary}
\newtheorem{assume}{Assumption}
\theoremstyle{definition}
\crefname{assume}{Assumption}{Assumptions}
\crefname{appendix}{Supplemental Appendix}{Supplemental Appendices}
\newcommand{\ud}{\mathrm{d}}
\DeclareMathOperator*{\diam}{diam}
\DeclareMathOperator*{\supp}{supp}
\newcommand*{\eqdef}{=:}
\newcommand{\dens}{\cD}
\newcommand{\bes}[3]{\cB^{#1}_{#2,#3}}
\newcommand{\besspace}{\bes{s}{p}{q}(L)}
\newcommand{\bounbes}{\bes{s}{p}{q}(L; m)}
\newcommand{\indic}[1]{\mathds{1}_{#1}}
\newcommand{\hol}[2]{\cC^{#1}(#2)}
\newcommand{\triplenorm}[1]{{\left\vert\kern-0.25ex\left\vert\kern-0.25ex\left\vert #1
    \right\vert\kern-0.25ex\right\vert\kern-0.25ex\right\vert}}
\newcommand{\RR}{\mathbb{R}}
\newcommand{\EE}{\mathbb{E}}
\newcommand*{\p}{\mathbb P}
\begin{document}
\begin{frontmatter}
	\title{Minimax estimation of smooth densities in Wasserstein distance}
	\runtitle{Density estimation in Wasserstein distance}

\begin{aug}
\author{\fnms{Jonathan}~\snm{Niles-Weed}\thanksref{t1}\ead[label=weed]{jnw@cims.nyu.edu	}}
\and
\author{\fnms{Quentin}~\snm{Berthet}\thanksref{t2}\ead[label=berthet]{qberthet@google.com}}

\affiliation{New York University\\ Google Research \& University of Cambridge}
\thankstext{t1}{Parts of this research were conducted while JNW was at the Institute for Advanced Study and at Massachusetts Institute of Technology, supported by the Josephine de Karman Fellowship.}
\thankstext{t2}{Most of this work was conducted while QB was at University of Cambridge and was supported in part by The Alan Turing Institute under the EPSRC grant EP/N510129/1.}

	\address{{Jonathan Niles-Weed}\\
		{Courant Institute of Mathematical Sciences} \\
		{New York University}\\
		{251 Mercer Street}\\
		{New York, NY 10012-1185, USA}\\
		\printead{weed}
	}

	\address{{Quentin Berthet}\\
	{Google Research} \\
	{8 Rue de Londres} \\
	{75009 Paris} \\
	{France} \\
		\printead{berthet}
	}

\runauthor{Niles-Weed and Berthet}
\end{aug}

	\begin{abstract}{}
	We study nonparametric density estimation problems where error is measured in the Wasserstein distance, a metric on probability distributions popular in many areas of statistics and machine learning.
We give the first minimax-optimal rates for this problem for general Wasserstein distances, and show that, unlike classical nonparametric density estimation, these rates depend on whether the densities in question are bounded below.
Motivated by variational problems involving the Wasserstein distance, we also show how to construct discretely supported measures, suitable for computational purposes, which achieve the minimax rates.
Our main technical tool is an inequality giving a nearly tight dual characterization of the Wasserstein distances in terms of Besov norms.
	\end{abstract}

	\begin{keyword}[class=AMS]
		\kwd[Primary ]{62F99}
		\kwd[; secondary ]{62H99}
	\end{keyword}
	\begin{keyword}[class=KWD]
Wasserstein distance; optimal transport; high-dimensional statistics; density estimation
	\end{keyword}
\end{frontmatter}

\section{Introduction}

Optimal transport is a fundamental problem in geometry, optimization, and analysis~\citep{Mon81,Kan42,Vil08}, with increasing applications in statistics.
A great deal of recent work in the machine learning community has shown that optimal transport can be used to develop successful empirical methodologies~\citep[see, e.g.,][for a survey]{PeyCut17} at a relatively low computational cost~\citep{Cut13,AltWeeRig17}.
This raises the need for a rigorous theoretical understanding of the statistical properties of optimal transport.

The Wasserstein distances are a metric on probability distributions defined using optimal transport.
A metric is defined by asking how one can transport mass with distribution $\mu$ to have another distribution $\nu$, with minimal global transport cost. This problem also has the probabilistic interpretation, known as the Monge--Kantorovich formulation, of finding a joint distribution $\pi$ minimizing a cost for variables $X$ and $Y$ with given marginals. The Wasserstein distance emerges as the minimum value of this problem, and creates a natural tool to compare distributions, with $W_p$ corresponding to the $\|\cdot\|^p$ transport cost:
\begin{equation}\label{wp_def}
W_p^p(\mu,\nu) 
= \inf_{\pi \in \mathcal{M}(\mu,\nu)} \int \|x-y\|^p \ud \pi(x,y)\, ,
\end{equation}
where the set $\cM(\mu, \nu)$ denotes the set of joint measures with marginals $\mu$ and $\nu$, respectively.
This definition naturally extends to positive measures $\mu$ and $\nu$ with the same total mass.

In many modern applications, a Wasserstein distance is used as a loss function in an optimization problem over measures. Solving such problems involves optimizing functionals of the form $\nu \mapsto W_p(\nu, \mu)$ where $\mu$ is unknown.
Given $n$ i.i.d.~samples from $\mu$, much of the statistics literature adopts the plug-in approach and focuses on using the empirical distribution $\hat \mu_n$ to obtain the estimated functional $\nu \mapsto W_p(\nu, \hat \mu_n)$.
In this case, the rates of convergence are of order $n^{-1/d}$, and the sample size required for a particular precision is exponential in the dimension, a phenomenon known as the {\em curse of dimensionality}.
Moreover, it is known that this exponential dependence is tight, in the sense that no better estimate is available in general~\citep{SinPoc18}.

Our work adopts a different approach to show that the plug-in estimator is suboptimal for measures possessing a smooth density.
Estimating the density of a distribution, based on independent samples, is one of the fundamental problems of statistics. The usual goal in these problems is to produce an estimate $\tilde f$ which is as close as possible to the unknown density $f$, measured either at one point of the sample space, or in $L_p$ norm.
In this line of work, $f$ is usually assumed to belong to a large, nonparametric class defined via smoothness or regularity conditions, and typically the rates obtained in this setting show that sufficient smoothness can substantially mitigate the curse of dimensionality.
This is the subject of a wide literature on \emph{nonparametric density estimation}~\citep[see, e.g][]{Tsy09}.
In this work, we follow the same philosophy and derive similar rates for $W_p$ distances, over Besov classes of densities $\bes{s}{p}{q}$. We likewise show that the smoothness parameter $s$ improves the optimal exponent of $n$ in the Wasserstein setting.

The key challenge in proving tight rates for nonparametric density estimation in the Wasserstein distances is that the distances $W_p$ are both non-linear and non-local.
Unlike $L_p$ norms or Besov norms, commonly considered in the nonparametric estimation literature, the $W_p$ distance is not a norm on the space of probability measures when $p > 1$.
Moreover, it is not even ``norm-like'': we show (\cref{thm:not_norm}) that when $p > 1$, the Wasserstein distance $W_p$ cannot be controlled by any reasonable norm on densities.
Moreover, the optimization problem~\eqref{wp_def} that defines the Wasserstein distance is highly non-local: modifications to $\mu$ and $\nu$ in a small neighborhood can change the global structure of the optimal coupling $\pi$.
This makes analyzing the behavior of an estimator challenging.

Algorithmic aspects are an important part of optimal transport problems. For practical applications, the proposed estimates must therefore also be computationally tractable. We describe a method inspired by the parametric bootstrap~\citep{EfrTib93} to produce computationally tractable atomic estimators from any estimator that outperforms the empirical distribution, under minimal assumptions. We study the computational cost of this method, compared to the cost of using the empirical distribution with $n$ atoms, and exhibit a trade-off between computational cost and statistical precision.

\subsection{Prior work}
The question of establishing minimax rates for estimation in Wasserstein distances has been examined in several recent works. \citet{SinPoc18} established that, in the absence of smoothness assumptions, the empirical distribution $\hat \mu$ is rate optimal in a variety of examples. Their proof relies on a dyadic partitioning argument \citep[see, e.g.][]{WeeBac18}, and does not appear to extend to the smooth case.
Closer to our setting, under a smoothness assumption on the density of $\mu$, \citet{Lia17} and \citet{SinUppLi18} showed minimax rates of convergence for the Wasserstein-1 distance. To obtain these rates, these works focus on the dual form of $W_1$:
\begin{equation*}
W_1(\mu, \nu) = \sup_{f \in \mathrm{Lip}} \int f (\mathrm{d} \mu - \mathrm{d} \nu)\,,
\end{equation*}
where the supremum is taken over all 1-Lipschitz functions. This dual formulation puts the Wasserstein-1 distance into the category of \emph{integral probability metrics}~\citep{Mul97}, for which both \citet{Lia17} and \citet{SinUppLi18} obtain general results. It has been shown that choosing functions which are smoother than Lipschitz in this definition can result in improved rates of convergence for empirical measures~\citep{Klo18}. Crucially, the metric $W_p$ for $p > 1$ is \emph{not} an integral probability metric. Establishing sharp rates for general Wasserstein distances therefore requires different techniques.
A separate line of work has focused instead on modifying the definition of the Wasserstein distance to include a regularizing term based on the mutual information of the coupling.
It has been shown that this definition enjoys improved convergence rates relative to the unregularized version~\citep{GenChiBac18,MenNil19}.

Our proofs rely on establishing control of Wassserstein distances by Besov norms of negative smoothness. Similar results have been obtained elsewhere under different conditions.
\citet{ShiJac08} showed that the optimal transportation distance with cost $\|\cdot\|^p$ for $0 < p < 1$ can be characterized explicitly via an expression involving wavelet coefficients, which implies that these distances agree with a particular Besov norm (see \cref{sec:besov_introduction}); however, their proof technique does not extend to $p > 1$.
\citet{Loe06} \citep[see also][]{MauRouSan10} showed that the Wasserstein-2 distance between measures with densities bounded above dominates a negative Sobolev norm, and \citet{Pey18} extended this result to show that $W_2$ is in fact \emph{equivalent} to such a norm when the densities are in addition bounded below.
Similar connections have been developed by~\citet{LeeCoi15} and~\citet{Led17}.
To our knowledge, ours is the first result to establish a connection to Besov norms of negative smoothness and general Wasserstein distances.

The use of wavelet estimators for density estimation has a long history in nonparametric statistics~\citep{KerPic92,HarKerPic98,DonJohKer96,DouLeo90,Wal92}. However, while wavelets have been used for computational purposes in the optimal transport community~\citep{CheIweChi12,ShiJac08,DomAngTan08,RabPeyDel11}, the statistical properties of wavelet estimators with respect to Wasserstein distances have remained largely unexplored.
 
\section{Main results}

\subsection{Problem description and preliminaries}
Our observation consists of an i.i.d.~sample of size $n$ drawn from a probability measure on $\RR^d$ with smooth density $f$.
Our goal is to compute an estimator~$\tilde \mu_n$ that is close to $\mu_f$ in expected \emph{Wasserstein distance}.
As noted above, such an estimator can serve as a proxy for $\mu_f$ in statistical and computational applications.
While estimation of the density $f$ in norms such as $L_p$ is a well studied problem in nonparametric statistics~\citep{Tsy09}, such estimates do not readily lend themselves to guarantees in Wasserstein distance.

For technical reasons, we restrict ourselves to the case of compactly supported measures.
We further assume that this support is contained in $\Omega \defeq [0, 1]^d$.
The general case can be reduced to this one by a rescaling argument, as long as the statistician can obtain \emph{a priori} bounds on the size of the support of the measure.
We do not address the question of obtaining such bounds in this work, but note that they are often available in practice.

\subsubsection{Wavelets and Besov spaces}\label{sec:besov_introduction}
We direct the reader to~\citet{HarKerPic98} and~\citet{Mey90} for an introduction to the theory of wavelets.
In brief, we assume the existence of sets $\Phi$ and~$\Psi_{j}$ for $j \geq 0$ of functions in $L_2(\Omega)$ satisfying the standard requirements of a wavelet basis. (See \cref{app:wave} for our precise assumptions.) 

Wavelets can be used to characterize the Besov spaces $\bes{s}{p}{q}(\Omega)$. We follow the approach of~\citet{Coh03} for defining such spaces on bounded domains.
Suppose $s > 0$ and $p, q \geq 1$, and let $n > s$ be an integer.
Given $h \in \RR^d$, set
\begin{align*}
\Delta_h^1 f(x) & \defeq f(x+h) - f(x)\\
\Delta_h^k f(x) & \defeq \Delta_h^1(\Delta_h^{k-1}f)(x) \quad \forall 1< k \leq n\,,
\end{align*}
where these functions are defined on $\Omega_{h, n} \defeq \{x \in \Omega: x + nh \in \Omega\}$.
For $t > 0$, we then define
\begin{equation*}
\omega_n (f, t)_p = \sup_{\|h\| \leq t} \|\Delta_h^n f\|_{L_p(\Omega_{h, n})}\,.
\end{equation*}
The function $\omega_n$ measures the order-$n$ smoothness of $f$ in $L_p$.
Finally, we define the space $\bes{s}{p}{q}(\Omega)$ to be the set of functions for which the quantity
\begin{equation*}
\|f\|'_{\bes{s}{p}{q}} \defeq \|f\|_{L_p} + \left\|(2^{sj} \omega_n(f, 2^{-j})_p)_{j \geq 0}\right\|_{\ell_q}
\end{equation*}
is finite.

Assuming that the elements of $\Phi$ and $\Psi_j$ have $r$ continuous derivatives for $r > s$ and that polynomials of degree up to $\lfloor s \rfloor$ lie in the span of $\Phi$, the norm $\|\cdot\|'_{\bes{s}{p}{q}}$ is equivalent to a sequence norm based on wavelet coefficients.
Given $f \in L_p(\Omega)$, denote by $\alpha = \{\alpha_\phi\}_{\phi \in \Phi}$ the vector defined by $\alpha_\phi \defeq \int f \phi$ and by $\beta_j = \{\beta_\psi\}_{\psi \in \Psi_j}$ the vector whose entries are given by $\beta_\psi \defeq \int f \psi$.
Then $\|\cdot\|'_{\bes{s}{p}{q}}$ is equivalent to $\|\cdot\|_{\bes{s}{p}{q}}$ defined by
\begin{equation}\label{eq:bes_def}
\|f\|_{\bes{s}{p}{q}} \defeq \|\alpha\|_{\ell_p} + \left\|2^{js} 2^{dj(\frac 12 - \frac 1p)} \|\beta_j\|_{\ell_p} \right\|_{\ell_q}\,.
\end{equation}

This expression can then be used directly to define a norm when $s < 0$~\citep[see][Theorem 3.8.1]{Coh03}, as long as the elements of $\Phi$ and $\Psi_j$ have $r$ continuous derivatives for $r > |s|$ and polynomials of degree up to $\lfloor |s| \rfloor$ lie in the span of $\Phi$.
In what follows, we therefore adopt~\eqref{eq:bes_def} as our primary definition and assume throughout that the wavelet system has sufficient regularity that the equivalence of $\|\cdot\|'_{\bes{s}{p}{q}}$ and $\|\cdot\|_{\bes{s}{p}{q}}$ holds.

\subsubsection{Notation}

The quantities $C$ and $c$ will refer to constants whose value may change from line to line. \textbf{All constants throughout may depend on the choice of wavelet system and the dimension.} Since we are interested in establishing optimal rates of decay with respect to the exponent (i.e., finding $\gamma$ such that the rate $n^{-\gamma}$ holds), we leave finer control on dimension-dependent constants to future work.
We freely use the notation $a \lesssim b$ to indicate that there exists a positive constant $C$ for which $a \leq C b$ holds. Again, such constants may depend on the multiresolution and dimension.
The notation $a \asymp b$ indicates that $a \lesssim b$ and $b \lesssim a$.

Given $a, b \in \RR$, we denote by $a \wedge b$ and $a \vee b$ the minimum and maximum of $a$ and $b$, respectively.
The symbol $\|\cdot\|$ denotes the Euclidean norm on $\RR^d$.

We set denote by $\dens(K)$ the set of probability density functions on $\Omega$.
Since the measures we consider are absolutely continuous with respect to the Lebesgue measure, we do not distinguish between a probability measure and its density.
In particular, we write $W_p(f, g)$ for the $p$-Wasserstein distance between the measures with densities $f$ and $g$.

\subsection{Minimax estimation of smooth densities}\label{sec:smooth}
In this section, we give our main statistical results on the problem of estimating densities in Wasserstein distance.
These results reveal several phenomena: (i) the minimax rate of estimation can improve significantly for smooth densities, and (ii) the optimal rates depend strongly on whether the density in question is bounded away from $0$. Indeed, we show that the optimal rate for general densities is strictly worse than the corresponding rate for densities bounded below, no matter the smoothness.
While the first phenomenon is well known in nonparametric statistics, the second phenomenon does not occur in classical density estimation problems. As we explore further below, this behavior is fundamental to the Wasserstein distances.

We define two classes of probability densities on $\Omega$.
Given $m, L > 0$, set
\begin{align*}
\besspace & \defeq \{f \in L_p(\Omega) : \|f\|_{\bes{s}{p}{q}} \leq L, \int f = 1, f \geq 0\} \\
\bounbes & \defeq \besspace \cap \{f : f \geq m\}\,.
\end{align*}
We note that if $s$ is sufficiently large and $L$ is sufficiently small then in fact $\besspace \subseteq \bounbes$ for $m$ a constant.
We assume throughout that $m < 1$, since when $m \geq 1$, the class $\bounbes$ is trivial.

\subsubsection{Bounded densities}
Our first result gives an upper bound on the rate of estimation for functions in $\bounbes$.

\begin{theorem}\label{thm:estimation_ub}
For any $p \geq 1$ and $s \geq 0$, there exists an estimator $\hat f$ such that for any $p \leq p' < \infty$, $1 \leq q \leq \infty$ and $m > 0$, the estimator satisfies
\begin{equation*}
\sup_{f \in \bes{s}{p'}{q}(L; m)} \E W_p(f, \hat f) \lesssim \left\{ \begin{array}{ll}
n^{- \frac{1 + s}{d + 2s}} & d \geq 3 \\
n^{-1/2} \log n  & d = 2 \\
n^{-1/2} & d = 1\,.
\end{array}\right.
\end{equation*}
\end{theorem}
The upper bound in \cref{thm:estimation_ub} is achieved by a wavelet estimator.
As $s$ ranges between $0$ and $\infty$, the upper bound interpolates between the dimension-dependent rate $n^{-1/d}$ and the fully parametric rate $n^{-1/2}$.

The estimator proposed in \cref{thm:estimation_ub} is adaptive to the lower bound $m$, but not to the smoothness $s$.
An adaptive version of this estimator can be obtained at the price of an extra logarithmic factor.

\begin{theorem}\label{thm:estimation_ub_adaptive}
For any $p \geq 1$ and $L > 0$, there exists an estimator $\hat f^\circ$ such that for any $p \leq p' < \infty$, $1 \leq q \leq \infty$, $m > 0$, and $s \geq 0$, the estimator satisfies
\begin{equation*}
\sup_{f \in \bes{s}{p'}{q}(L;m)} \E W_p(f, \hat f^\circ) \lesssim \left\{ \begin{array}{ll}
n^{- \frac{1 + s}{d + 2s}} \log n & d \geq 3 \\
n^{-1/2} (\log n)^2  & d = 2 \\
n^{-1/2} & d = 1\,.
\end{array}\right. 
\end{equation*}
\end{theorem}
Though this estimator is adaptive to $s$, $p'$, and $q$, using it requires an \emph{a priori} estimate of $L$ (or, more specifically, the $L_p$ norm of $f$).
A similar phenomenon is present when designing adaptive density estimators in classical nonparametric statistics~\citep{DonJohKer96}.

Our lower bounds nearly match the upper bounds proved in Theorem~\ref{thm:estimation_ub}, up to a logarithmic factor in the $d = 2$ case.
\begin{theorem}\label{thm:estimation_lb}
For any $p,p',q \geq 1$, and $s \geq 0$,
\begin{equation*}
\adjustlimits\inf_{\tilde \mu} \sup_{f \in \bes{s}{p'}{q}(L; m)} \E W_p(f, \tilde \mu) \gtrsim \left\{ \begin{array}{ll}
n^{- \frac{1 + s}{d + 2s}} & d \geq 2 \\
n^{-1/2}  & d = 1\,,
\end{array}\right.
\end{equation*}
where the infimum is taken over all estimators $\tilde \mu$ based on $n$ observations.
\end{theorem}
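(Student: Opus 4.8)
The plan is to derive both lower bounds by passing to the Wasserstein-$1$ distance via the elementary bound $W_1 \le W_p$ (Jensen's inequality applied to any coupling), and then to run standard minimax arguments: Le Cam's two-point method for $d \le 2$, Fano's method for $d \ge 3$. In every case the competing hypotheses are obtained from the uniform density $f_0 \equiv 1$ — which lies strictly inside $\bounbesp$ because $m<1$ — by adding wavelets at a single, carefully chosen resolution level.

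\textbf{Case $d \ge 3$.} Fix $j$ with $2^j \asymp n^{1/(d+2s)}$, set $\epsilon \defeq c_0 n^{-1/2}$ for a small constant $c_0$, and let $K$ be a sub-collection of $\asymp 2^{dj}$ level-$j$ interior wavelets with pairwise disjoint supports; write each $\psi \in K$ as $\psi(x) = 2^{dj/2}\psi_0(2^jx - k_\psi)$ for the mother wavelet $\psi_0$. For $\omega \in \{0,1\}^K$ put $f_\omega \defeq 1 + \epsilon \sum_{\psi \in K}\omega_\psi \psi$. I would check: (i) \emph{membership}: the nonzero wavelet coefficients of $f_\omega - 1$ occur only at level $j$ and are bounded by $\epsilon$, so by~\eqref{eq:bes_def} the $p'$-powers cancel (as $|K|\asymp 2^{dj}$) and $\|f_\omega\|_{\bes{s}{p'}{q}} \le \|1\|_{\bes{s}{p'}{q}} + C\,2^{js}2^{dj/2}\epsilon \asymp \|1\|_{\bes{s}{p'}{q}} + Cc_0 \le L$ for $c_0$ small, since $2^{js}2^{dj/2}\epsilon = c_0$ by the choice of scale; similarly $\|f_\omega - 1\|_\infty \lesssim 2^{dj/2}\epsilon \asymp c_0 < 1-m$, so $f_\omega \ge m$, giving $f_\omega \in \bounbesp$; (ii) \emph{indistinguishability}: since $f_{\omega'} \ge m$ and the wavelets are orthonormal, $\chi^2(\mu_{f_\omega}\,\|\,\mu_{f_{\omega'}}) \le m^{-1}\|f_\omega - f_{\omega'}\|_{L_2}^2 = m^{-1}\epsilon^2 \rho(\omega,\omega')$ (Hamming distance $\rho$), hence $\mathrm{KL}(\mu_{f_\omega}^{\otimes n}\,\|\,\mu_{f_{\omega'}}^{\otimes n}) \le m^{-1}n\epsilon^2|K| = m^{-1}c_0^2|K|$.

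The crux is (iii), the separation of the hypotheses in $W_1$. For fixed $\omega \ne \omega'$ I would use Kantorovich duality with $\varphi \defeq \sum_{\psi\in K}(\omega_\psi - \omega'_\psi)g_\psi$, where $g_\psi(x) \defeq 2^{-j} g(2^jx - k_\psi)$ and $g$ is a fixed, $1$-Lipschitz bump supported in a region on which $\psi_0$ has constant sign, scaled so that $\langle g, \psi_0\rangle > 0$. Each $g_\psi$ is $1$-Lipschitz and supported inside $\supp\psi$; since the $\supp\psi$ are disjoint, $\varphi$ is globally $1$-Lipschitz, and
\[
W_1(\mu_{f_\omega},\mu_{f_{\omega'}}) \;\ge\; \int \varphi\,(f_\omega - f_{\omega'}) \;=\; \epsilon\,2^{-j}2^{-dj/2}\,\langle g,\psi_0\rangle\,\rho(\omega,\omega')\, .
\]
The gain of the factor $2^{-j}$ here — the amplitude of a $1$-Lipschitz function that oscillates at scale $2^{-j}$, in contrast with the $L_2$ analysis underlying classical density estimation — is exactly what turns the exponent $s$ into $s+1$. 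Now take a Varshamov--Gilbert packing $\Omega' \subseteq \{0,1\}^K$ with $|\Omega'| \ge 2^{c|K|}$ and $\rho \ge c|K|$ on $\Omega'$. Then for $\omega\neq\omega'\in\Omega'$, $W_p(\mu_{f_\omega},\mu_{f_{\omega'}}) \ge W_1(\mu_{f_\omega},\mu_{f_{\omega'}}) \gtrsim \epsilon\,2^{-j}2^{-dj/2}|K| \asymp \epsilon\,2^{j(d/2-1)} \asymp n^{-1/2}n^{(d/2-1)/(d+2s)} = n^{-(1+s)/(d+2s)}$, while (ii) and a small enough $c_0$ make the averaged KL at most $\tfrac1{16}\log|\Omega'|$; Fano's inequality (decoding $\tilde\mu$ to the nearest hypothesis in $W_p$) then gives $\inf_{\tilde\mu}\sup_{f\in\bounbesp}\E W_p(\mu_f,\tilde\mu) \gtrsim n^{-(1+s)/(d+2s)}$.

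\textbf{Case $d \le 2$.} The same construction with a single level-$0$ wavelet ($|K|=1$, $\epsilon \asymp n^{-1/2}$) already works: (i) and (iii) go through unchanged — now the scale factor is a constant, so $W_1(\mu_{f_0},\mu_{f_1})\gtrsim n^{-1/2}$ — while (ii) gives $\mathrm{KL}(\mu_{f_0}^{\otimes n}\,\|\,\mu_{f_1}^{\otimes n}) \lesssim c_0^2$, and Le Cam's two-point lemma yields $\inf\sup\E W_p \gtrsim n^{-1/2}$. The principal obstacle throughout is step (iii): producing, uniformly over all admissible sign patterns, a single globally $1$-Lipschitz function correlated with the perturbation while extracting the correct $2^{-j}$ scaling — the disjoint-support device is what prevents the Lipschitz constant from degrading where neighboring bumps meet. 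A secondary but essential point, specific to this ``bounded-below'' regime, is that the perturbations must be small enough to keep $f_\omega \ge m$, which is also exactly what renders the $\chi^2$ bound in (ii) free of dimension; dropping the lower bound on the density is what forces the strictly slower rate for general densities obtained elsewhere in the paper.
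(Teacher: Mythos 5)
Your proof is correct and gets the right rate, but it does differ in two noticeable ways from the paper's. The paper also perturbs the uniform density by level-$J$ wavelets and also reduces to $W_1$, but (a) it proves the lower bound via \emph{Assouad's lemma} applied to the full hypercube of sign vectors $\{\pm 1\}^{|\Psi_J|}$, whereas you use \emph{Fano with a Varshamov--Gilbert packing} for $d \geq 3$ and Le Cam's two-point method for $d \leq 2$; and (b) the paper obtains the key $W_1$ separation between $\mu_{f_\epsilon}$ and $\mu_{f_{\epsilon'}}$ by invoking Proposition~\ref{prop:wavelet_lb} (the Besov lower bound on $W_p$, itself resting on Lemma~\ref{lem:sob_dual}), whereas you reprove that step from scratch via Kantorovich duality, building an explicit globally $1$-Lipschitz test function $\varphi = \sum_\psi (\omega_\psi - \omega'_\psi) g_\psi$ from $2^{-j}$-amplitude bumps with disjoint supports. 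Both substitutions are standard and the computations check out (scale $2^j \asymp n^{1/(d+2s)}$, amplitude $\epsilon \asymp n^{-1/2}$, disjoint wavelet supports yielding $\|f_\omega - 1\|_\infty \lesssim c_0$, $\chi^2 \leq m^{-1}\epsilon^2\rho$, separation $\epsilon\, 2^{-j}2^{-dj/2}\rho$, and the exponent identity $-1/2 + (d/2-1)/(d+2s) = -(1+s)/(d+2s)$). The Assouad route is shorter because it lets one work with the full hypercube and avoid a packing argument; your Fano/Kantorovich route is more self-contained since it does not rely on the machinery of Section~\ref{sec:bes_lb}, and it exposes cleanly where the extra factor $2^{-j}$ comes from, namely the maximal amplitude of a $1$-Lipschitz function confined to a cell of side $2^{-j}$, which is exactly what upgrades the classical density-estimation exponent $s$ to $s+1$.

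One small point worth flagging: for $d = 2$ the paper actually lets $J$ grow as $n^{1/(d+2s)}$ and the rate lands on $n^{-1/2}$ because the exponent $d/2 - 1$ vanishes, whereas you freeze $J = 0$; both are fine, but your choice makes the $d \leq 2$ case genuinely a two-point argument (which is the cleaner way to present it). Also, in step (i) you write that ``the $p'$-powers cancel,'' which is a bit terse; it is worth spelling out that $\|\beta_J\|_{\ell_{p'}} \leq \epsilon |K|^{1/p'}$ and the $2^{dj/p'}$ from $|K|^{1/p'}$ combines with the $2^{dj(1/2 - 1/p')}$ prefactor in~\eqref{eq:bes_def} to give $2^{dj/2}$, independently of $p'$, so the Besov norm constraint reduces to $2^{j(s+d/2)}\epsilon \lesssim 1$ for all $p' \geq 1$ simultaneously.
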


Our core technical contribution is the following result, which establishes a connection between Wasserstein distances and Besov norms of negative smoothness.
\Cref{thm:besov_wp} allows us to bypass the difficulties of the Wasserstein distance by bounding instead a nearly equivalent norm.
\begin{theorem}\label{thm:besov_wp}
Let $p \in [1, \infty)$.
Let $f, g$ be two densities in $L_p([0, 1]^d)$, and assume $M \geq f(x) \vee g(x) \geq m > 0$ for almost every $x \in [0, 1]^d$.
Then
\begin{equation*}
M^{-1/p'} \|f - g\|_{\cB^{-1}_{p, \infty}} \lesssim W_p(f, g) \lesssim m^{-1/p'} \|f - g\|_{\cB^{-1}_{p, 1}}\,,
\end{equation*}
where $\frac 1p + \frac{1}{p'} = 1$.
\end{theorem}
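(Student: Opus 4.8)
The plan is to prove matching upper and lower bounds by sandwiching $W_p(\mu_f,\mu_g)$ between multiples of the homogeneous negative Sobolev norm $\|f-g\|_{\dot W^{-1,p}(\Omega)}$. The structural fact that drives everything is that, for $1<p<\infty$, this norm is comparable to $\inf\{\|w\|_{L_p(\Omega)} : \nabla\cdot w = f-g,\ w\cdot n = 0 \text{ on } \partial\Omega\}$, and it is this divergence‑constrained minimum that couples naturally to optimal transport. The two Besov norms in the statement enter only at the end, through the embeddings $\|u\|_{\cB^{-1}_{p,\infty}} \lesssim \|u\|_{\dot W^{-1,p}} \lesssim \|u\|_{\cB^{-1}_{p,1}}$, which hold because $\dot W^{-1,p}$ coincides with the Triebel--Lizorkin space $\dot F^{-1}_{p,2}$, sitting between $\cB^{-1}_{p,1}$ and $\cB^{-1}_{p,\infty}$ in the Besov scale (using $\int(f-g)=0$ to also control the coarsest wavelet block on $\Omega=[0,1]^d$). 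The endpoint $p=1$ would be treated separately, since there $W_1$ is itself the dual of the Lipschitz norm and the claim is classical.

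\textbf{Upper bound.} First I would construct, by wavelet synthesis, a vector field $w$ on $\Omega$ with $\nabla\cdot w=f-g$, $w\cdot n=0$ on $\partial\Omega$, and $\|w\|_{L_p(\Omega)}\lesssim\|f-g\|_{\cB^{-1}_{p,1}}$: writing $f-g=\sum_j d_j$ for its multiresolution details $d_j=(P_{j+1}-P_j)(f-g)=\sum_{\psi\in\Psi_j}\beta_\psi\psi$, invert the divergence one scale at a time to produce $w_j$ with $\nabla\cdot w_j=d_j$ and $\|w_j\|_{L_p}\asymp 2^{-j}2^{dj(1/2-1/p)}\|\beta_j\|_{\ell_p}$ (a single integration gains the factor $2^{-j}$ at scale $j$), and set $w=\sum_j w_j$; the $\ell_1$‑over‑scales summation built into $\cB^{-1}_{p,1}$ is exactly what makes $\|w\|_{L_p}\le\sum_j\|w_j\|_{L_p}$ converge. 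Then I would interpolate linearly, $f_t\defeq(1-t)f+tg$, which satisfies $f_t\ge m$ for every $t\in[0,1]$, and note that $(f_t,v_t)$ with $v_t\defeq w/f_t$ solves the continuity equation $\partial_t f_t+\nabla\cdot(f_t v_t)=0$ in the distributional sense, since $\partial_t f_t=g-f$ and $\nabla\cdot(f_t v_t)=\nabla\cdot w=f-g$. The metric characterization of $W_p$ for curves solving the continuity equation then gives
\[
W_p(\mu_f,\mu_g)\le\int_0^1\|v_t\|_{L_p(\mu_{f_t})}\,\ud t=\int_0^1\Big(\int_\Omega\frac{|w|^p}{f_t^{\,p-1}}\Big)^{1/p}\ud t\le m^{-1/p'}\|w\|_{L_p(\Omega)}\lesssim m^{-1/p'}\|f-g\|_{\cB^{-1}_{p,1}},
\]
using $f_t\ge m$ and $1/p'=(p-1)/p$. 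This step uses only $f,g\ge m$ and no smoothness of the densities.

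\textbf{Lower bound.} By duality it suffices to bound $\int_\Omega\phi(f-g)$ for every $\phi$ with $\|\nabla\phi\|_{L_{p'}(\Omega)}\le1$. Fixing such a $\phi$ and an optimal coupling $\pi$ for $W_p(\mu_f,\mu_g)$, with displacement interpolation $\mu_s\defeq((1-s)y+sx)_\#\pi$ and density $\rho_s$, I would write $\phi(x)-\phi(y)=\int_0^1\nabla\phi((1-s)y+sx)\cdot(x-y)\,\ud s$, integrate against $\pi$, and apply H\"older with exponents $p',p$ followed by pushforward:
\[
\int_\Omega\phi(f-g)=\int_0^1\!\!\int\nabla\phi((1-s)y+sx)\cdot(x-y)\,\ud\pi\,\ud s\le\int_0^1\Big(\int_\Omega|\nabla\phi|^{p'}\rho_s\Big)^{1/p'}\ud s\;W_p(\mu_f,\mu_g).
\]
Since $\mu_f,\mu_g$ have densities at most $M$, the displacement interpolation also has density at most $\max(\|f\|_\infty,\|g\|_\infty)\le M$, so $\int_\Omega|\nabla\phi|^{p'}\rho_s\le M$ and hence $\int_\Omega\phi(f-g)\le M^{1/p'}W_p(\mu_f,\mu_g)$. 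Taking the supremum over $\phi$ gives $\|f-g\|_{\dot W^{-1,p}}\le M^{1/p'}W_p(\mu_f,\mu_g)$, and therefore $W_p(\mu_f,\mu_g)\gtrsim M^{-1/p'}\|f-g\|_{\cB^{-1}_{p,\infty}}$.

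\textbf{Where the difficulty lies.} The substantive work is not in the transport estimates above but in the two endpoint ingredients on the cube. For the upper bound, I would need the divergence inversion to respect the boundary, i.e.\ to produce $w$ with $w\cdot n=0$ on $\partial\Omega$ together with the sharp $L_p\leftrightarrow\cB^{-1}_{p,1}$ control, which requires the boundary‑adapted wavelet system and the associated bounded‑domain Besov theory, plus the reduction of the coarsest block via $\int(f-g)=0$. For the lower bound the delicate point is the density bound $\|\rho_s\|_\infty\le M$ along $W_p$‑geodesics: for $p=2$ it is immediate from concavity of $\det^{1/d}$ on positive semidefinite matrices (this is Loeper's argument), but for general $p>1$ it requires a closer analysis of the $W_p$‑optimal map, and I expect this to be the main obstacle.
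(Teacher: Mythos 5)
Your upper bound is essentially the paper's argument verbatim: construct, wavelet-scale by wavelet-scale, a vector field $V$ on the cube with $\nabla\cdot V = f - g$, $V\cdot\mathbf{n} = 0$ on $\partial\Omega$, and $\|V\|_{L_p} \lesssim \|f-g\|_{\cB^{-1}_{p,1}}$ (this is Proposition~\ref{6}), feed $(\rho_t, E) = ((1-t)f + tg,\, V)$ into the Benamou--Brenier functional, and use $\rho_t \geq m$ to pull out $m^{-1/p'}$. The $\ell_1$-in-$j$ summation inside $\cB^{-1}_{p,1}$ is, as you say, exactly what makes the triangle inequality across scales close.

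The lower bound is where you diverge. The paper cites the duality inequality $\int h(f-g)\,\ud x \leq M^{1/p'}\|\nabla h\|_{L_{p'}}W_p(\mu_f,\mu_g)$ from \citet[Lemma 3.4]{MauRouSan10}, then extracts the $\cB^{-1}_{p,\infty}$ seminorm one scale at a time by testing against $h = \sum_\phi \kappa_\phi\phi + \sum_{\psi\in\Psi_j}\lambda_\psi\psi$ with $\|\kappa\|_{\ell_{p'}}\le 1$ and $\|\lambda\|_{\ell_{p'}}\le 2^{-j+dj(1/2-1/p)}$, verifying $\|\nabla h\|_{L_{p'}}\lesssim 1$ (Lemma~\ref{lem:h_estimate}), and taking $\sup_j$. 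You instead re-derive the cited inequality from displacement interpolation and then pass through the chain $\dot W^{-1,p}\cong F^{-1}_{p,2}\hookrightarrow\cB^{-1}_{p,\infty}$. The ingredient you flag --- the $L_\infty$ bound $\rho_s\le M$ along $W_p$-geodesics for $p\neq 2$ --- is genuine: the concavity-of-$\det^{1/d}$ argument needs the Jacobian of the transport map to be symmetric PSD, which is automatic only when $p=2$. This is, however, a known fact, and the paper imports it wholesale as part of the MauRouSan10 citation (Lemma 3.4 plus the remark extending it to general $p$), so your route is correct given that reference; the gap is a missing citation rather than a broken step. The paper's direct wavelet test has the side benefit of sidestepping the bounded-domain subtleties in identifying $\dot W^{-1,p}([0,1]^d)$ with a Triebel--Lizorkin space and handling the coarse block, which your sketch leaves to the reader.
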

\Cref{thm:besov_wp} can be viewed as a partial extension of the dual formulation of~$W_1$ to $W_p$ for $p > 1$.
Indeed, the inclusions
$\bes{1}{\infty}{1} \subseteq \mathrm{Lip} \subseteq \bes{1}{\infty}{\infty}$, where $\mathrm{Lip}$ is the space of bounded Lipschitz functions, imply $\|f - g\|_{\cB^{-1}_{1, \infty}} \lesssim W_1(f, g) \lesssim \|f - g\|_{\cB^{-1}_{1, 1}}$.
\Cref{thm:besov_wp} establishes the analogous result when $p > 1$, but only when the densities $f$ and $g$ are bounded.
A proof of this theorem appears in \cref{sec:wasbes}.

We prove \cref{thm:estimation_ub,thm:estimation_ub_adaptive,,thm:estimation_lb} in \cref{sec:bounded}.

\subsubsection{Unbounded densities}
Obtaining minimax rates when the densities are no longer bounded below is significantly more challenging since, as \cref{thm:not_norm} below makes clear, it is no longer possible to control $W_p$ by \emph{any} function norm in the absence of a lower bound on the densities in question.
Moreover, the statistical properties of the estimation problem also change markedly: surprisingly, the density estimation problem over the class $\besspace$ is strictly harder than the corresponding problem over $\bounbes$, even under a smoothness assumption.
We prove the following lower bound.

\begin{theorem}\label{thm:unbounded_lb}
For any $p,p',q \geq 1$, and $s \geq 0$, if $L$ is a sufficiently large constant, then
\begin{equation*}
\adjustlimits\inf_{\tilde \mu} \sup_{f \in \bes{s}{p'}{q}(L)} W_p(f, \tilde \mu) \gtrsim  \left\{\begin{array}{ll}

n^{-\frac{1 + s/p}{d + s}} & d -s \geq 2 p \\
n^{-1/2p} & d - s < 2p
\end{array}\right.
\end{equation*}
where the infimum is taken over all estimators $\tilde \mu$ based on $n$ observations.
\end{theorem}

Note that, when $p \geq 2$, this rate is worse than the upper bound given in \cref{thm:estimation_ub} for all $s > 0$ and $d \geq 1$.
This establishes that the class of densities bounded from below is strictly easier to estimate than the class of all densities, for all nontrivial smoothness parameters.

When $s \in [0, 1)$, and we consider the Holder class $\hol{s}{L}$, we can obtain a nearly matching upper bound, up to a logarithmic factor.
Moreover, the estimator we construct is a \emph{histogram}. This property enables the use of such an estimator in practical applications. We take up this point in \cref{sec:comp}.

\begin{theorem}\label{thm:unbounded_ub}
Assume $p \geq 2$.
For any $s \in [0, 1)$, there exists a histogram estimator $\hat f$ such that
\begin{equation*}
\sup_{f \in \hol{s}{L}} \E W_p(f, \hat f) \lesssim \left\{ \begin{array}{ll}
n^{- \frac{1 + s/p}{d + s}} & d - s > 2 p \\
n^{- \frac{1}{2p}} \log n  & d - s = 2p \\
n^{- \frac{1}{2p}}  & d - s< 2p\,.
\end{array}\right.
\end{equation*}
\end{theorem}

The proofs of both \cref{thm:unbounded_ub,thm:unbounded_lb} appear in \cref{sec:unbounded}.

\subsection{Computational aspects of smooth density estimation}
In many computational applications, it is significantly simpler to work with discrete measures supported on a finite number of points, since in general there is no closed form expression for the Wasserstein distance between continuous measures.
Unfortunately, the estimators presented in \cref{sec:smooth} are not of this form, so it is unclear whether smoothness of the underlying measure can be exploited in applications.
However, a simple argument shows that optimal rates can be achieved by \emph{resampling} from the smooth estimator we construct to obtain a discrete distribution supported on $M \geq n$ points which achieves the optimal rate for $s \in [0, 1)$.
We extract one simple result in this direction.
\begin{theorem}\label{thm:ub_comp}
For any $s \in [0, 1)$ and any $1 \leq p < \infty$, there exists an estimator $\bar \mu_{n, M}$, supported on $M = o(n^2)$ points, enjoying the same rate as in \cref{thm:unbounded_ub}, up to logarithmic factors.
Moreover, $\bar \mu_{n, M}$ can be computed in time $O(M)$.
\end{theorem}
Additional computational considerations along with a proof of \cref{thm:ub_comp} appear in \cref{sec:comp}. 
\section{Controlling the Wasserstein distance by Besov norms}\label{sec:wasbes}
The main goal of this section is a proof of \cref{thm:besov_wp}, which establishes that the Wasserstein distance between two measures on $\Omega = [0, 1]^d$ can be controlled by a Besov norm of the difference in their densities \emph{as long as their densities are bounded above and below}. We also establish that no analogous result can hold for arbitrary densities.
While we give upper and lower bounds, the Besov norms appearing in the two bounds do not agree. We do not know whether under some conditions the $W_p$ distance is in fact \emph{equivalent} to a particular Besov norm $\|\cdot\|_{\bes{-1}{p}{q}}$ for some $q \in [1, + \infty]$.

The results of this section are closely related to results of \citet{ShiJac08} and \citet{Pey18}, who established similar relations for $p < 1$ and $p = 2$, respectively.
In \cref{sec:bes_ub}, we show the upper bound of \cref{thm:besov_wp}, and in \cref{sec:bes_lb} we show the lower bound. \Cref{sec:gen_dens} establishes that there is no general relationship between Wasserstein distances and Besov norms once the assumption that the density is bounded away from $0$ is relaxed.

\subsection{Upper bound}\label{sec:bes_ub}
Let $f$ and $g$ be probability densities in $L_p(\Omega)$ for $p \in [1, \infty)$ with the following wavelet expansions.
\begin{equation}\label{eq:fg-expansion}
\begin{aligned}
f & = \sum_{\phi \in \Phi} \alpha_\phi \phi + \sum_{j \geq 0} \sum_{\psi \in \Psi_j} \beta_\psi \psi \\
g & = \sum_{\phi \in \Phi} \alpha'_\phi \phi + \sum_{j \geq 0} \sum_{\psi \in \Psi_j} \beta'_\psi \psi\,,
\end{aligned}
\end{equation}
where we assume (\cref{assume:regularity}) that constant functions lie in the span of $\Phi$.
For the upper bound, we do not need to assume any additional regularity---in particular, \cref{prop:wavelet_ub} holds for the Haar wavelet basis~\citep[see][]{Tri10}.
In principle, the expansions in~\eqref{eq:fg-expansion} hold only in $L_2$, but in fact convergence also holds in $L_p$ assuming that $f, g \in L_p(\Omega)$~\citep[Remark 8.4]{HarKerPic98}.

\begin{prop}\label{prop:wavelet_ub}
Let $1 \leq p < \infty$.
If $f(x) \vee g(x) \geq m > 0$ for almost every $x \in [0, 1]^d$, then
\begin{equation*}
W_p(f, g) \lesssim m^{- 1/p'} \Big( \|\alpha - \alpha'\|_{\ell_p} + \sum_{j \geq 0} 2^{-j} 2^{dj(\frac 12 - \frac 1 p)} \|\beta_j - \beta_j'\|_{\ell_p}\Big)\,,
\end{equation*}
where $\frac 1p + \frac{1}{p'} = 1$.
\end{prop}
By rescaling $f$ and $g$ by a positive real number, this proposition implies that the analogous claim holds for any two nonnegative functions $f$ and $g$ satisfying $\int_\Omega f = \int_\Omega g < \infty$.

\begin{proof}
We will follow a strategy originally developed by~\citet{Mos65} for the purpose of showing that all volume forms on a smooth, compact manifold are equivalent up to automorphism.
We define a vector field $V$ on $\Omega$ satisfying
\begin{align}
\nabla \cdot V & = f - g \label{eq:v_divergence}\\
\|V\|_{L^p} & \lesssim \|\alpha - \alpha'\|_{\ell_p} + \sum_{j \geq 0} 2^{-j} 2^{dj(\frac 12 - \frac 1 p)} \|\beta_j - \beta_j'\|_{\ell_p}\label{eq:v_norm}\,,
\end{align}
where the first condition is intended in the distributional sense that
\begin{equation*}
- \int_\Omega \nabla h \cdot V \dd x = \int_\Omega h (f - g) \dd x\,.
\end{equation*}
for all $h \in C^1(\Omega)$.
In particular, we require the boundary condition $V \cdot \mathbf{n} = 0$ on $\partial \Omega$, where $\mathbf{n}$ is an outward-pointing normal.
We defer the construction of this vector field to Proposition~\ref{prop:vlambda}, below.

To show the theorem, we appeal to the following characterization of the Wasserstein distance.
Denote by $K_\Omega$ the set of pairs of measures $(\rho, E)$ on $\Omega \times [0, 1]$ where $\rho$ is scalar valued and $E$ is vector valued.
\begin{theorem}[\citealp{BenBre00,Bre03}]
For any measures $\mu$ and $\nu$ on $\Omega$ and $p \in [1, \infty)$,
\begin{equation*}
W_p^p(\mu, \nu) = \inf_{(\rho, E) \in K_\Omega} \left\{\cB_p(\rho, E): \rho(\cdot, 0) = \mu, \rho(\cdot, 1) = \nu,  \partial_t \rho + \nabla_x \cdot E = 0 \right\}\,,
\end{equation*}
where
\begin{equation*}
\cB_p(\rho, E) \defeq \left\{\begin{array}{ll}\int_{\Omega \times [0, 1]} \left\| \frac{d E}{d \rho}(x, t)\right\|^p \dd \rho(x, t) & \text{ if $E \ll \rho$,} \\
+ \infty & \text{ otherwise.}
\end{array}\right.
\end{equation*}
\end{theorem}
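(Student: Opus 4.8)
Since the displayed identity is quoted from \citet{BenBre00} and \citet{Bre03}, in the paper we simply invoke it; but a self-contained argument would establish the two inequalities $W_p^p(\mu,\nu)\ge\inf_{(\rho,E)\in K_\Omega}\cB_p(\rho,E)$ and $W_p^p(\mu,\nu)\le\inf_{(\rho,E)\in K_\Omega}\cB_p(\rho,E)$ separately: the first by exhibiting an explicit admissible pair built from an optimal static coupling, and the second by a superposition argument combined with Jensen's inequality applied to the convex map $z\mapsto\|z\|^p$.

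For the inequality ``$\ge$'', let $\pi\in\cM(\mu,\nu)$ attain $W_p^p(\mu,\nu)$ and let $\iota_t(x,y)\defeq(1-t)x+ty$, which maps $\Omega\times\Omega$ into $\Omega=[0,1]^d$ by convexity. Define the curve $\rho_t\defeq(\iota_t)_\#\pi$ and the vector measure $E_t$ by $\int\xi\cdot\ud E_t\defeq\int\xi(\iota_t(x,y))\cdot(y-x)\,\ud\pi(x,y)$ for test fields $\xi$, and set $\rho\defeq\rho_t\otimes\ud t$, $E\defeq E_t\otimes\ud t$ on $\Omega\times[0,1]$. Differentiating $\int\phi\,\ud\rho_t$ in $t$ for $\phi\in C^1(\Omega)$ shows $\partial_t\rho+\nabla_x\cdot E=0$ in the distributional sense with the prescribed endpoints $\rho_0=\mu$, $\rho_1=\nu$, and disintegrating $\pi$ along $\iota_t$ shows $E\ll\rho$ with $\tfrac{\ud E}{\ud\rho}(\iota_t(x,y),t)=y-x$ for $\pi$-a.e.\ $(x,y)$. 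Hence $\cB_p(\rho,E)=\int_0^1\int\|y-x\|^p\,\ud\pi\,\ud t=W_p^p(\mu,\nu)$, so the infimum is at most $W_p^p(\mu,\nu)$.

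For the inequality ``$\le$'', fix an admissible $(\rho,E)$ with $\cB_p(\rho,E)<\infty$, write $v_t\defeq\tfrac{\ud E}{\ud\rho}(\cdot,t)$, and disintegrate $\rho=\rho_t\otimes\ud t$, so $t\mapsto\rho_t$ solves $\partial_t\rho_t+\nabla_x\cdot(v_t\rho_t)=0$ with $\int_0^1\int\|v_t\|^p\,\ud\rho_t\,\ud t<\infty$. By the superposition principle for solutions of the continuity equation (Ambrosio--Gigli--Savar\'e) there is a probability measure $\eta$ on $C([0,1];\Omega)$ concentrated on absolutely continuous curves $\gamma$ with $\dot\gamma(t)=v_t(\gamma(t))$ for a.e.\ $t$ and with $(e_t)_\#\eta=\rho_t$, where $e_t(\gamma)\defeq\gamma(t)$. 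Then $(e_0,e_1)_\#\eta\in\cM(\mu,\nu)$, and
\[
W_p^p(\mu,\nu)\;\le\;\int\|\gamma(1)-\gamma(0)\|^p\,\ud\eta(\gamma)\;\le\;\int\int_0^1\|\dot\gamma(t)\|^p\,\ud t\,\ud\eta(\gamma)\;=\;\cB_p(\rho,E),
\]
the middle step being Jensen's inequality along each curve $\gamma$ and the last equality following by unwinding the marginals of $\eta$. Minimizing over $(\rho,E)$ gives the claim.

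The main obstacle is the rigorous invocation of the superposition principle on the constrained cube $\Omega$: one must verify that $v_t$ has enough integrability for the characteristics to be well defined $\eta$-a.e., and ensure no mass escapes through $\partial\Omega$ --- equivalently, that the distributional continuity equation on $\Omega$ already encodes the no-flux boundary condition (as in the boundary condition $V\cdot\mathbf n=0$ used above), so one may extend $(\rho,E)$ by zero to $\RR^d\times[0,1]$ and apply the principle there. A lesser point is the disintegration and measurability needed to define $E_t$ and check $E\ll\rho$ in the first inequality. Both are standard in optimal transport theory but would need to be spelled out for a fully self-contained treatment, which is why we quote the result instead.
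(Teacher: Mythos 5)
The paper does not prove this statement: it is quoted as a theorem of Benamou--Brenier and Brenier and used as a black box, exactly as you observe in your opening sentence. Your sketch of the underlying argument is the standard one (static-to-dynamic construction via the displacement interpolant $\iota_t$ for the ``$\ge$'' direction, superposition principle plus Jensen along curves for the ``$\le$'' direction) and is essentially correct. One small overstatement: in the first half you assert $\frac{\ud E}{\ud \rho}(\iota_t(x,y),t) = y-x$ and hence $\cB_p(\rho,E) = W_p^p(\mu,\nu)$ with equality; if $\iota_t$ is not injective on $\supp\pi$ the Radon--Nikodym derivative is a conditional expectation of $y-x$, giving only $\cB_p(\rho,E)\le W_p^p(\mu,\nu)$ by Jensen --- which is all you need for that direction (and for $p>1$ the interpolation is in fact injective by cyclical monotonicity, so equality holds there; for $p=1$ the inequality suffices). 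The caveats you flag at the end --- integrability hypotheses for the superposition principle, no-flux boundary handling on $[0,1]^d$ versus extending by zero to $\RR^d$ --- are genuine and are precisely the reasons the paper defers to the references rather than reproving the result.
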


Let us show how to prove the theorem.
We choose $\rho$ and $E$ absolutely continuous with respect to the Lebesgue measure on $\Omega \times [0, 1]$, and consequently identify them with their density.
First, set $\rho(x, t) = (1-\lambda(t)) f(x) + \lambda(t) g(x)$, where $\lambda: [0, 1] \to [0, 1]$ is defined by
\begin{equation*}
\lambda(x) \defeq \left\{
	\begin{array}{ll}
		\frac 12 (2 t)^p & \text{ if $t \leq 1/2$,} \\
		1 - \frac 12 (2 - 2t)^p & \text{ if $t > 1/2$.}
	\end{array}
	\right.
\end{equation*}
Clearly $\rho(\cdot, 0) = f(\cdot)$ and $\rho(\cdot, 1) = g(\cdot)$.
Moreover, we have the following lower bound.
\begin{lemma}\label{interp_lb}
Suppose $f(x) \vee g(x) \geq m$ for almost all $x \in \Omega$.
Then
\begin{equation*}
\rho(x, t) \geq \left\{
	\begin{array}{ll}
		\frac 12 (2t)^p m & \text{ if $t \leq 1/2$,} \\
		\frac 12 (2 - 2t)^p m & \text{ if $t > 1/2$}
	\end{array}
	\right.
\end{equation*}
for almost every $x \in \Omega$.
In particular, for $t \neq 1/2$, we have that $\rho(x, t) \geq \frac 12 m \cdot \left(\frac{\lambda'(t)}{p}\right)^{\frac{p}{p-1}}$.
\end{lemma}
We then define $E$ by
\begin{equation*}
E(x, t) = \lambda'(t) V(x) \quad \text{ for $t \in [0, 1] \setminus \{1/2\}$.}
\end{equation*}
Since $\nabla_x \cdot E = \lambda'(t) (f-g)$ for almost all $x \in \Omega$ and $t \in [0, 1]$, the pair $(\rho, E)$ defined in this way satisfies $\partial_t \rho + \nabla_x \cdot E = 0$ in the distributional sense.

For almost every $t \in [0, 1]$, we have the bound
\begin{equation*}
\left\| \frac{d E}{d \rho}(x, t)\right\|^p \rho(x, t) \leq \|V(x)\|^p \frac{\lambda'(t)^p}{\rho(x, t)^{p-1}} \lesssim \|V(x)\|^p m^{1-p}\,.
\end{equation*}
We obtain
\begin{align*}
W_p(f, g) & \leq \left(\int_{\Omega \times [0, 1]} \left\| \frac{d E}{d \rho}(x, t)\right\|^p \dd \rho(x, t)\right)^{1/p} \\
& \lesssim m^{1/p - 1} \|V\|_{L_p} \\
& \lesssim m^{-1/p'} \left(\|\alpha - \alpha'\|_{\ell_p} + \sum_{j \geq j_0} 2^{-j} 2^{dj(\frac 12 - \frac 1 p)} \|\beta_j - \beta_j'\|_{\ell_p}\right)\,,
\end{align*}
as claimed.
\end{proof}

All that remains is to establish the existence of the promised vector field $V$.
\begin{prop}\label{prop:vlambda}
There exists a vector field $V$ satisfying the requirements of~\eqref{eq:v_divergence} and~\eqref{eq:v_norm}.
\end{prop}
\begin{proof}
We will proceed by defining vector fields $V_\phi$ for each $\phi \in \Phi$ and $V_\psi$ for each $\psi \in \Psi_j$, $j \geq 0$ satisfying $\nabla \cdot V_\phi = \phi$ and $\nabla \cdot V_\psi = \psi$, along with appropriate boundary conditions.
The desired vector field $V$ will then be obtained as
\begin{equation}\label{eq:v_def}
V = \sum_{\phi \in \Phi} (\alpha_\phi - \alpha'_\phi) V_\phi + \sum_{j \geq 0} \sum_{\psi \in \Psi_j} (\beta_\psi - \beta'_\psi) V_\psi\,.
\end{equation}
An application of Fubini's theorem immediately yields that this definition satisfies~\eqref{eq:v_divergence} in the distributional sense.
We conclude by obtaining the desired estimate for $\|V\|_{L_p}$ to show~\eqref{eq:v_norm}

\paragraph{Definition of $V_\phi$ for $\phi \in \Phi$.}
Given $x \in \RR^d$, we write $x^{(i)}$ for the vector consisting of the first $i$ coordinates of $x$.
For each $1 \leq i \leq d$, define $\phi^{(i)}: \RR^i \to \RR$ by
\begin{equation*}
\phi^{(i)}(x^{(i)}) = \int_0^1 \dots \int_0^1 \phi(x^{(i)}, t_{i+1}, \dots, t_d) \dd t_{i+1} \dots \dd t_d\,,
\end{equation*}
and set $\phi^{(0)} = 0$.
We define $V_\phi$ componentwise as
\begin{equation*}
(V_\phi)_i(x)  = \int_0^{x_i} \phi^{(i)}(x^{(i-1)}, t_i) \dd t_i - x_i \phi^{(i-1)}(x^{(i-1)}) \quad \quad 1 \leq i \leq d\,.
\end{equation*}

We now verify that this definition satisfies the desired identity. The proof appears in \cref{app:proofs}.
\begin{lemma}\label{lem:vphi_divergence}
The field $V_\phi$ satisfies $\nabla \cdot V_\phi = \phi$.
Moreover, 
$\left(\sum_{\phi \in \Phi} (\alpha_\phi - \alpha'_\phi) V_\phi\right)\cdot \mathbf{n} = 0$ on the boundary of $[0, 1]^d$, where $\mathbf n$ is an outward-pointing normal.
\end{lemma}

\paragraph{Definition of $V_\psi$ for $\psi \in \Psi_j$, $j \geq 0$.}
We adopt essentially the same construction as above.
First, by \cref{assume:tensor}, $\psi$ can be written as $\bigotimes_{i=1}^d \psi_i$, where each $\psi_i$ is a univariate function. \Cref{assume:basis,assume:regularity} imply that $\int_{[0, 1]^d} \psi(x) \dd x= 0$, so there exists an index $k \in [d]$ such that $\int_{[0, 1]} \psi_k(x_k) \dd x_k = 0$.
We set
\begin{equation*}
(V_\psi)_k(x) = \int_0^{x_k} \psi_k(t) \dd t \cdot \prod_{i \neq k} \psi_i(x_i) \,,
\end{equation*}
and $(V_\psi)_i = 0$ for $i \neq k$.

\begin{lemma}\label{lem:vpsi_divergence}
The field $V_\psi$ satisfies $\nabla \cdot V_\psi = \psi$ and $V_\psi \cdot \mathbf{n} = 0$ on the boundary of $[0, 1]^d$, where $\mathbf{n}$ is an outward-pointing normal.
\end{lemma}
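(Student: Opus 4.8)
The plan is to verify the two claimed properties of $V_\psi$ directly from its definition, using the fact that $V_\psi$ has a single nonzero component. Recall that $\psi = \prod_{i=1}^d \psi_i$ by Assumption~\ref{assume:tensor}, that $\int_{[0,1]^d} \psi \dd x = 0$ by Assumptions~\ref{assume:basis} and~\ref{assume:regularity}, and hence that the index $k$ with $\int_0^1 \psi_k(t) \dd t = 0$ exists; the vector field is $(V_\psi)_k(x) = \bigl(\int_0^{x_k} \psi_k(t) \dd t\bigr)\prod_{i \neq k} \psi_i(x_i)$ and $(V_\psi)_i \equiv 0$ for $i \neq k$.

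For the divergence identity, since only the $k$-th component is nonzero we have $\nabla \cdot V_\psi = \partial_{x_k}(V_\psi)_k$. The map $x_k \mapsto \int_0^{x_k}\psi_k(t)\dd t$ is absolutely continuous with a.e.\ derivative $\psi_k(x_k)$, so $\partial_{x_k}(V_\psi)_k = \psi_k(x_k)\prod_{i\neq k}\psi_i(x_i) = \psi(x)$; to accommodate the Haar case, where $\psi_k$ need not be continuous, I would phrase this weakly: for any $h \in C^1(\Omega)$, integrating by parts in $x_k$ gives $-\int_\Omega \partial_{x_k}h\,(V_\psi)_k \dd x = \int_\Omega h\,\psi \dd x$ plus boundary contributions from the faces $x_k \in \{0,1\}$, which vanish by the boundary analysis below. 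This is exactly the distributional sense of $\nabla \cdot V_\psi = \psi$ required by Proposition~\ref{6}.

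For the boundary condition, on $\partial[0,1]^d$ the outward normal $\mathbf n$ equals $\pm e_i$ for a single coordinate $i$. If $i \neq k$ then $V_\psi \cdot \mathbf n = \pm (V_\psi)_i = 0$ trivially. If $i = k$, the faces in question are $\{x_k = 0\}$, where $\int_0^0 \psi_k(t)\dd t = 0$, and $\{x_k = 1\}$, where $\int_0^1 \psi_k(t)\dd t = 0$ by the defining choice of $k$; in both cases $(V_\psi)_k = 0$. Hence $V_\psi \cdot \mathbf n \equiv 0$ on $\partial[0,1]^d$, which also annihilates the boundary terms in the integration by parts above. There is essentially no obstacle here: the construction is engineered so that antidifferentiating a single factor produces the desired divergence, and the zero-integral property of $\psi_k$ is precisely what is needed to kill the top face; the only point of care is the weak interpretation of the divergence when the wavelets are no smoother than Haar.
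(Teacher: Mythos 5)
Your proof is correct and follows essentially the same route as the paper's: directly differentiate the single nonzero component to get $\nabla \cdot V_\psi = \psi$, then check the boundary faces one by one, using $\int_0^1\psi_k = 0$ to kill the face $\{x_k = 1\}$. You are somewhat more careful than the paper in spelling out the distributional interpretation needed for low-regularity wavelets such as Haar, which is a worthwhile clarification but not a different argument.
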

A proof appears in \cref{app:proofs}.

\paragraph{Norm estimates.}

We now obtain an estimate for $\|V\|_{L_p}$.
We require two lemmas, the proofs of which appear in \cref{app:proofs}.

\begin{lemma}\label{lem:vphi_norm_estimate}
For any sequence $\{\alpha_\phi\}_{\phi \in \Phi}$,
\begin{equation*}
\left\|\sum_{\phi \in \Phi} \alpha_\phi V_\phi\right\|_{L_p} \lesssim \|\alpha\|_{\ell_p}\,.
\end{equation*}
\end{lemma}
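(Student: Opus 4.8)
The plan is to reduce the bound to two elementary observations: that the coarse-scale index set $\Phi$ has cardinality bounded by a constant depending only on the wavelet system and the dimension (Lemma~\ref{assume:size}), and that each individual field $V_\phi$ is bounded in $L_\infty(\Omega)$. Granting these, all $\ell_q$-norms on vectors indexed by $\Phi$ are equivalent up to such constants, so in particular $\sum_{\phi \in \Phi} |\alpha_\phi| \lesssim \|\alpha\|_{\ell_p}$, and the triangle inequality gives
\[
\Bigl\|\sum_{\phi \in \Phi} \alpha_\phi V_\phi\Bigr\|_{L_p} \le \sum_{\phi \in \Phi} |\alpha_\phi|\,\|V_\phi\|_{L_p(\Omega)} \lesssim \sum_{\phi \in \Phi} |\alpha_\phi| \lesssim \|\alpha\|_{\ell_p}\,,
\]
which is exactly the claim.

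The only thing left to check is the pointwise estimate $\|V_\phi\|_{L_\infty(\Omega)} \lesssim 1$. By Assumption~\ref{assume:norm} we have $\|\phi\|_{L_\infty} \lesssim 1$ for every $\phi \in \Phi$, and since each $\phi^{(i)}$ is obtained from $\phi$ by integrating out the last $d-i$ coordinates over $[0,1]$, one gets $\|\phi^{(i)}\|_{L_\infty} \le \|\phi\|_{L_\infty} \lesssim 1$ for all $0 \le i \le d$. Substituting into the definition of $(V_\phi)_i$ and using $0 \le x_i \le 1$ on $\Omega = [0,1]^d$,
\[
|(V_\phi)_i(x)| \le \int_0^{x_i} |\phi^{(i)}(x^{(i-1)}, t_i)|\,\dd t_i + x_i\,|\phi^{(i-1)}(x^{(i-1)})| \le \|\phi^{(i)}\|_{L_\infty} + \|\phi^{(i-1)}\|_{L_\infty} \lesssim 1\,.
\]
Hence $\|V_\phi\|_{L_\infty(\Omega)} \lesssim 1$, and since $\vol(\Omega) = 1$ we conclude $\|V_\phi\|_{L_p(\Omega)} \le \|V_\phi\|_{L_\infty(\Omega)} \lesssim 1$.

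I expect no genuine obstacle here: the argument is a crude uniform bound, and the only point requiring attention is bookkeeping of the implicit constants, which are allowed to depend on the fixed wavelet system and dimension — in particular through $|\Phi| \lesssim 1$. This should be contrasted with the companion estimate for the high-frequency fields $V_\psi$, where $|\Psi_j| \asymp 2^{dj}$ grows with the scale, so a naive triangle-inequality bound fails and one must instead exploit the locality of the wavelets (only a bounded number of $\psi \in \Psi_j$ are supported near any given point) together with the decaying factor $2^{-j}$ coming from the primitive $\int_0^{x_k} \psi_k$.
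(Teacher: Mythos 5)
Your proof is correct, and it takes a genuinely different route from the paper. The paper proves the pointwise bound
\[
\Bigl|\sum_{\phi \in \Phi} \alpha_\phi (V_\phi)_i(x)\Bigr|^p \lesssim \int_0^1\cdots\int_0^1 \Bigl|\sum_{\phi \in \Phi} \alpha_\phi \phi(x^{(i-1)}, t_i, \dots, t_d)\Bigr|^p \dd t_{i} \cdots\dd t_d
\]
(pulling the linear combination through the integrals defining $V_\phi$ and applying Jensen twice, using $x_i \le 1$), integrates over $\Omega$ to get $\|\sum_\phi \alpha_\phi V_\phi\|_{L_p} \lesssim \|\sum_\phi \alpha_\phi \phi\|_{L_p}$, and then closes via the stability estimate Lemma~\ref{assume:stability}. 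That argument is deliberately structured to parallel the $V_\psi$ case, where such a transfer to the function norm is essentially forced. Your approach instead bounds each $\|V_\phi\|_{L_\infty(\Omega)} \lesssim 1$ directly (using $\|\phi^{(i)}\|_{L_\infty} \le \|\phi\|_{L_\infty}$ and $x_i \le 1$) and then uses a brute triangle inequality, which is legitimate here precisely because $|\Phi| \lesssim 1$ so all $\ell_q$-norms on $\RR^{|\Phi|}$ are comparable; you correctly flag that this is exactly what breaks for $\Psi_j$. This is more elementary and avoids both Jensen and Lemma~\ref{assume:stability}. One small attribution nit: Assumption~\ref{assume:norm} as written covers only $\psi \in \Psi_j$, so the bound $\|\phi\|_{L_\infty} \lesssim 1$ for $\phi \in \Phi$ is not literally part of that assumption; it is, however, a standard feature of the construction, and the paper itself invokes it without comment in the proof of Proposition~\ref{prop:moment_bounds}, so this is harmless.
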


\begin{lemma}\label{lem:vpsi_norm_estimate}
For any sequence $\{\beta_\psi\}_{\psi \in \Psi_j}$,
\begin{equation*}
\left\|\sum_{\psi \in \Psi_j} \beta_\psi V_\psi\right\|_{L_p} \lesssim 2^{-j} 2^{dj(\frac 12 - \frac 1p)} \|\beta\|_{\ell_p}\,.
\end{equation*}
\end{lemma}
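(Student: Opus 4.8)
The plan is to exploit the fact that each $V_\psi$ has exactly one nonzero component. Recall that $(V_\psi)_k$ is nonzero only in the coordinate $k=k(\psi)$ in which the univariate factor of $\psi=\prod_{i=1}^d\psi_i$ has vanishing integral. First I would partition $\Psi_j=\bigsqcup_{k=1}^d\Psi_j^{(k)}$ according to this coordinate, so that the $k$-th component of $\sum_{\psi\in\Psi_j}\beta_\psi V_\psi$ equals $\sum_{\psi\in\Psi_j^{(k)}}\beta_\psi (V_\psi)_k$ while the remaining components of the individual summands vanish. Since all norms on $\RR^d$ are equivalent up to constants, $\big\|\sum_\psi\beta_\psi V_\psi\big\|_{L_p}^p\asymp\sum_{k=1}^d\int_\Omega\big|\sum_{\psi\in\Psi_j^{(k)}}\beta_\psi (V_\psi)_k(x)\big|^p\dd x$, so it suffices to bound each of the $d$ inner integrals by $\big(2^{-j}2^{dj(\frac{1}{2}-\frac{1}{p})}\big)^p\|\beta\|_{\ell_p}^p$ and add the results.

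Next I would record the two structural facts needed to treat a single component. The primitive $t\mapsto\int_0^t\psi_k(s)\dd s$ vanishes outside $\supp\psi_k$ (it is $0$ to the left of the support and $0$ to the right of it, by the vanishing-integral property), hence $\supp (V_\psi)_k\subseteq\supp\psi$; and by the localization and normalization of the wavelet system (Assumptions~\ref{assume:locality} and~\ref{assume:norm}), at any fixed scale $j$ and coordinate $k$ at most $O(1)$ of the cubes $\{\supp\psi:\psi\in\Psi_j^{(k)}\}$ contain a given point. Combining this bounded-overlap property with H\"older's inequality gives the pointwise bound $\big|\sum_{\psi\in\Psi_j^{(k)}}\beta_\psi (V_\psi)_k(x)\big|^p\lesssim\sum_{\psi\in\Psi_j^{(k)}}|\beta_\psi|^p|(V_\psi)_k(x)|^p$, and integrating term by term reduces everything to the per-wavelet estimate $\|(V_\psi)_k\|_{L_p}^p\lesssim\big(2^{-j}2^{dj(\frac{1}{2}-\frac{1}{p})}\big)^p$.

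That per-wavelet estimate is then a routine computation using the tensor structure: by Fubini, $\|(V_\psi)_k\|_{L_p}^p=\big(\int|\int_0^t\psi_k(s)\dd s|^p\dd t\big)\prod_{i\neq k}\|\psi_i\|_{L_p}^p$. From $\|\psi_i\|_{L_\infty}\lesssim 2^{j/2}$ and $|\supp\psi_i|\asymp 2^{-j}$ one gets $\|\psi_i\|_{L_p}^p\asymp 2^{jp/2}2^{-j}$, and $\|\int_0^{\cdot}\psi_k\|_{L_\infty}\le\|\psi_k\|_{L_\infty}|\supp\psi_k|\lesssim 2^{-j/2}$ gives $\int|\int_0^t\psi_k|^p\dd t\lesssim 2^{-jp/2}2^{-j}$; multiplying these and collecting exponents yields $\|(V_\psi)_k\|_{L_p}^p\lesssim 2^{(d-2)jp/2-dj}=\big(2^{-j}2^{dj(\frac{1}{2}-\frac{1}{p})}\big)^p$, as needed. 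Summing over $\psi\in\Psi_j^{(k)}$ and then over the $d$ coordinates $k$ completes the argument.

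I expect the only point requiring genuine care to be the bounded-overlap/support step: one must check that the inclusion $\supp (V_\psi)_k\subseteq\supp\psi$ persists for the boundary-adapted wavelets near $\partial[0,1]^d$ (their primitives must still vanish at the far endpoint, which again follows from the vanishing integral of $\psi_k$), and that the overlap constant depends only on the dimension and wavelet system, consistent with the paper's conventions. Everything after that is bookkeeping of powers of $2^j$.
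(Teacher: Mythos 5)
Your argument is correct and follows essentially the same route as the paper's: reduce to a per-wavelet $L_p$ estimate on $V_\psi$ via the bounded-overlap plus H\"older step, using the observation that the vanishing integral of $\psi_k$ forces the primitive (and hence $V_\psi$) to vanish outside $\supp\psi$. Two minor differences: the paper does not bother to partition $\Psi_j$ by the active coordinate $k(\psi)$ (it just uses the triangle inequality on the Euclidean norm of the vector field, since each $V_\psi$ has a single nonzero component, and then applies the bounded-overlap H\"older trick across all of $\Psi_j$ at once); and the paper derives the per-wavelet bound $\|V_\psi\|_{L_p}^p\lesssim 2^{-jp}2^{pdj(\frac12-\frac1p)}$ by applying H\"older directly to $\int_0^{x_k}\psi_k$ over the support interval and invoking Assumption~\ref{assume:norm} for $\|\psi\|_{L_p}$, rather than decomposing via Fubini into univariate factors and using $\|\psi_i\|_{L_\infty}\lesssim 2^{j/2}$ — the latter is true for the CDV construction but is a fact about the factors that is not literally among the stated assumptions, whereas the paper's route uses only the listed Assumptions~\ref{assume:tensor}, \ref{assume:locality}, and~\ref{assume:norm}.
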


We obtain, for $V$ defined as in~\eqref{eq:v_def},
\begin{align*}
\|V\|_{L_p} & \leq \left\|\sum_{\phi \in \Phi} (\alpha_\phi - \alpha'_\phi) V_\phi\right\|_{L_p} + \sum_{j \geq 0}  \left\|\sum_{\psi \in \Psi_j} (\beta_\psi - \beta'_\psi) V_\psi\right\|_{L_p} \\
& \lesssim \|\alpha - \alpha'\|_{\ell_p} + \sum_{j \geq 0} 2^{-j} 2^{dj(\frac 12 - \frac 1p)} \|\beta_j - \beta_j'\|_{\ell_p}\,,
\end{align*}
which gives~\eqref{eq:v_norm}.
\end{proof}

\subsection{Lower bound}\label{sec:bes_lb}
We can prove a similar lower bound when $f$ and $g$ are bounded above.
Unlike the assumption that the densities are bounded below required for \cref{prop:wavelet_ub}, this assumption is relatively benign, insofar as it holds automatically for continuous densities on $[0, 1]^d$.
For \cref{prop:wavelet_lb}, we require the wavelets in~\eqref{eq:fg-expansion} to possess at least one continuous derivative (see \cref{assume:regularity}).

\begin{prop}\label{prop:wavelet_lb}
Let $1 \leq p < \infty$. If $f(x) \vee g(x) \leq M$ for almost every $x \in [0, 1]^d$, then
\begin{equation*}
W_p(f, g) \gtrsim M^{-1/p'} \left(\|\alpha - \alpha'\|_{\ell_p} + \sup_{j \geq †0}\left\{ 2^{-j} 2^{dj(\frac 12 - \frac 1 p)} \|\beta_j - \beta_j'\|_{\ell_p}\right\}\right)\,.
\end{equation*}
\end{prop}

\begin{proof}
We use the following fact, due to~\citet[Lemma~3.4 and Remark]{MauRouSan10}:
\begin{lemma}[\citealp{MauRouSan10}]\label{lem:sob_dual}
For all $h \in C^1(\Omega)$,
\begin{equation*}
\int_\Omega h (f - g) \dd x \leq M^{1/p'} \|\nabla h\|_{L_{p'}(\Omega)} W_p(f, g)\,.
\end{equation*}
\end{lemma}

Fix an index $j \geq 0$.
Let $h$ be a function of the form
\begin{equation*}
h = \sum_{\phi \in \Phi} \kappa_\phi \phi + \sum_{\psi \in \Psi_j} \lambda_\psi \psi\,,
\end{equation*}
for some vectors $\kappa$ and $\lambda$ satisfying $\|\kappa\|_{\ell_{p'}} \leq 1$ and $\|\lambda\|_{\ell_{p'}} \leq 2^{-j + dj(\frac 12 - \frac 1p)}$.

We require the following bound, whose proof appears in \cref{app:proofs}.
\begin{lemma}\label{lem:h_estimate}
If $\|\kappa\|_{\ell_{p'}} \leq 1$ and $\|\lambda\|_{\ell_{p'}} \leq 2^{-j + dj(\frac 12 - \frac 1p)}$, then $\|\nabla h\|_{L_{p'}(\Omega)} \lesssim 1$.
\end{lemma}

Applying Lemmas~\ref{lem:sob_dual} and~\ref{lem:h_estimate}, we obtain
\begin{equation*}
W_p(f, g) \gtrsim M^{-1/p'} \int_\Omega h(f - g) \dd x = M^{-1/p'} \left(\sum_{\phi \in \Phi} \kappa_\phi (\alpha_\phi - \alpha'_\phi) + \sum_{\psi \in \Psi_j} \lambda_\psi (\beta_\psi - \beta'_\psi)\right)\,.
\end{equation*}
Taking the supremum over $\kappa$ and $\lambda$ subject to the constraints $\|\kappa\|_{\ell_{p'}} \leq 1$ and $\|\lambda\|_{\ell_{p'}} \leq 2^{-j + dj(\frac 12 - \frac 1p)}$ implies
\begin{equation*}
W_p(f, g) \gtrsim M^{-1/p'} \left(\|\alpha - \alpha'\|_{\ell_p} +  2^{-j} 2^{dj(\frac 12 - \frac 1 p)} \|\beta_j - \beta_j'\|_{\ell_p}\right)\,,
\end{equation*}
and taking the supremum over $j \geq 0$ yields the claim.
\end{proof}

\subsection{Densities not bounded below}\label{sec:gen_dens}
We now show that no statement like \cref{prop:wavelet_ub} can hold if one of the densities is not bounded away from zero. Indeed, in this case, under mild assumptions, it is impossible to control $W_p(f, g)$ by any function norm when $p > 1$.
This stands in sharp contrast to the fact that, when $p = 1$, the dual formulation of $W_1$ implies that the Wasserstein distance is such a norm.

\begin{theorem}\label{thm:not_norm}
Let $\triplenorm{\cdot}$ be any norm on functions on $\Omega$, and suppose that there exists a function $h$ in~$L_1(\Omega)$, not identically zero, satisfying
\begin{itemize}
\item $\int_\Omega h = 0$
\item $\triplenorm{h} < \infty$
\item The sets $\overline{\{h > 0\}}$ and $\overline{\{h < 0\}}$ are disjoint.
\end{itemize}

Then for any $p > 1$,
\begin{equation*}
\sup_{f, g \in \dens(\Omega)} \frac{W_p(f, g)}{\triplenorm{f - g}} = \infty\,.
\end{equation*}
\end{theorem}
\begin{proof}
Let $h_+ \defeq h \vee 0$ and $h_- \defeq -(h \wedge 0)$, so that $h = h_+ - h_-$, and note that by assumption $\int_\Omega h_+ = \int_\Omega h_- \eqdef \rho > 0$.
For any $\lambda \in [0, 1]$, set 
\begin{align*}
f_\lambda & = \frac {1}{2 \rho} ((1+\lambda) h_+ + (1-\lambda) h_-) \\
g_\lambda & = \frac {1}{2 \rho} ((1- \lambda) h_+ + (1+\lambda) h_-)\,.
\end{align*}
Note that $f_\lambda, g_\lambda \in \dens(\Omega)$, and $\triplenorm{f - g} = \frac{\lambda}{\rho} \triplenorm{h}$.
On the other hand, since the compact sets $\overline{\{h > 0\}}$ and $\overline{\{h < 0\}}$ are disjoint, there exist two sets $S$ and $T$ and $c > 0$ such that $\supp(h_+) \subseteq S$ and $\supp(h_-) \subseteq T$ and $\|x - y\| \geq c$ for any $x \in S, y \in T$.
\Cref{lem:wp_separated_lb}, below, therefore implies $W_p(f, g) \geq c|\int_S f - g|^{1/p} = c \lambda^{1/p}$.

We obtain
\begin{equation*}
\sup_{f, g \in \dens(\Omega)} \frac{W_p(f, g)}{\|f - g\|} \geq \sup_{\lambda \in (0, 1)} \frac{W_p(f_\lambda, g_\lambda)}{\|f_\lambda - g_\lambda\|} \gtrsim \sup_{\lambda \in (0, 1)} \lambda^{1/p - 1} = \infty\,.
\end{equation*}
\end{proof}

\section{Wavelet estimation in for bounded densities}\label{sec:bounded}
In this section, we employ the results of \cref{sec:wasbes} to prove \cref{thm:estimation_ub,thm:estimation_ub_adaptive,,thm:estimation_lb}.
We show that the minimax rate over $\bounbes$ can be achieved by a wavelet estimator.

\subsection{Upper bound}
To prove Theorem~\ref{thm:estimation_ub}, we introduce the following estimator based on a wavelet expansion of regularity $r > \max\{s, 1\}$ (see \cref{assume:regularity}) truncated to level $J$, for some $J \geq 0$ to be chosen. Set
\begin{align*}
\tilde \alpha_\phi & \defeq \frac 1n \sum_{i} \phi(X_i) \quad \phi \in \Phi\\
\tilde \beta_\psi & \defeq \frac 1n \sum_{i} \psi(X_i) \quad \psi \in \Psi_j, 0 \leq j \leq J 
\end{align*}
and let $\tilde f \defeq \sum_{\phi \in \Phi} \tilde \alpha_\phi \phi + \sum_{0 \leq j \leq J} \sum_{\psi \in \Psi_j} \tilde \beta_\psi \psi$.
While such an estimator can already yield optimal rates in $L_p$~\citep{KerPic92}, $\tilde f$ may fail to be a probability density, in which case the quantity $W_p(f, \tilde f)$ is undefined.
We therefore focus on the estimator
\begin{equation*}
\hat f \defeq \min_{g \in \dens} \|g - \tilde f\|_{\bes{-1}{p}{1}}\,,
\end{equation*}
where $\dens$ is the set of probability densities on $\Omega$.
By construction, $\hat f$ is a density, so that $W_p(f, \hat f)$ is meaningful.

The following lemma shows that the quality of this estimator can be controlled by the distance between $f$ and $\tilde f$ in Besov norm.
\begin{lemma}\label{lem:wp_to_bes}
For any $f \in \bounbes$,
\begin{equation*}
W_p(f, \hat f) \lesssim \|f - \tilde f\|_{\bes{-1}{p}{1}}\,.
\end{equation*}
\end{lemma}
\begin{proof}
By assumption, $f$ is bounded below by $m$. Theorem~\ref{thm:besov_wp} therefore implies
\begin{equation*}
W_p(f, \hat f) \lesssim \|f - \hat f\|_{\bes{-1}{p}{1}} \leq \|f - \tilde f\|_{\bes{-1}{p}{1}} + \|\hat f - \tilde f\|_{\bes{-1}{p}{1}} \lesssim \|f - \tilde f\|_{\bes{-1}{p}{1}}\,,
\end{equation*}
where the final inequality uses the definition of $\hat f$ and the fact that $f \in \dens$.
\end{proof}

The proof of Theorem~\ref{thm:estimation_ub} now follows from standard facts in wavelet density estimation.
We require the following proposition, whose proof appears in \cref{app:proofs}.
\begin{prop}\label{prop:moment_bounds}
Let $f$ have wavelet expansion as in~\eqref{eq:fg-expansion}, and let $0 \leq j \leq J$.
If $n \geq 2^{dJ}$ and $p \geq 1$, then
\begin{align*}
\EE \|\alpha - \tilde \alpha\|_{\ell_p} & \lesssim \frac{1}{n^{1/2}}\\
\EE \|\beta_j - \tilde \beta_j\|_{\ell_p} & \lesssim \frac{2^{dj/p}}{n^{1/2}}\,.
\end{align*}
\end{prop}

We are now in a position to prove Theorem~\ref{thm:estimation_ub}.
\begin{proof}[Proof of Theorem~\ref{thm:estimation_ub}]
Denote by $f_J$ the projection of $f$ to $\mathrm{span}\left(\Phi \cup \left(\bigcup_{j \leq J} \Psi_j\right)\right)$, i.e..
\begin{equation*}
f_J = \sum_{\phi \in \Phi} \alpha_\phi \phi + \sum_{0 \leq j \leq J} \sum_{\psi \in \Psi_j} \beta_\psi \psi\,.
\end{equation*}
The assumption that $f \in \bes{s}{p'}{q}(L)$ implies by \cref{assume:size} that
$$2^{dj(\frac 12 - \frac 1 p)} \|\beta_j\|_{\ell_p} \lesssim 2^{dj(\frac 12 - \frac{1}{p'})} \|\beta_j\|_{\ell_{p'}} \lesssim 2^{-js}$$
for all $j \geq 0$; hence
\begin{equation*}
\|f - f_J\|_{\bes{-1}{p}{1}} = \sum_{j > J} 2^{-j} 2^{dj(\frac 12 - \frac 1 p)} \|\beta_j\|_{\ell_p} \lesssim 2^{-J(s + 1)}\,.
\end{equation*}
Lemma~\ref{lem:wp_to_bes} implies
\begin{align*}
\E W_p(f, {\hat f}) & \lesssim \E \|f_J - \tilde f\|_{\bes{-1}{p}{1}} + \|f - f_J\|_{\bes{-1}{p}{1}} \\
& \lesssim \E \|\alpha - \tilde \alpha\|_{\ell_p} + \sum_{0 \leq j \leq J} \sum_{\psi \in \Psi_j} 2^{-j}2^{dj(\frac 12 - \frac 1p)} \E \|\beta_j - \tilde \beta_j\|_{\ell_p} + 2^{-J(s + 1)} \\
& \lesssim \sum_{0 \leq j \leq J} 2^{-j} \left( \frac{2^{dj}}{n}\right)^{1/2} + 2^{-J(s + 1)}
\end{align*}
by \cref{prop:moment_bounds}, as long as $2^J \leq n^{1/d}$.
Choose $J$ so that $2^J \asymp n^{\frac{1}{d + 2s}}$.
If $d \geq 3$, the last term in the sum dominates and is of the same order as $2^{-J(s+1)} = n^{-{\frac{1+s}{d+2s}}}$.
If $d \leq 2$, the approximation term is negligible and the sum is of order $n^{-1/2}$ if $d =1$, or $n^{-1/2} \log n$ if $d = 2$.
\end{proof}

\subsection{An adaptive estimator}
By applying a truncation technique, we can also obtain an adaptive procedure which achieves the optimal rate (up to logarithmic factors) for any $s \geq 0$.

For any $j \geq 0$, set
\begin{align}\label{tau_j}
\tau_j & \defeq 8 (1+L) K 2^{dj/p} \frac{dj}{\sqrt n}\,,
\end{align}
where $K$ is a constant defined in \cref{bernstein_bound} depending on the wavelet system.
For each $j \geq 0$, define by $\tilde \beta_j$ the vector of empirical wavelet coefficients at level $j$, that is, the vector indexed by $\Psi_j$ whose coordinates are given by
\begin{equation*}
\tilde \beta_\psi \defeq \frac 1n \sum_{i=1}^n \psi(X_i)\,.
\end{equation*}
The threshold $\tau_j$ is chosen in such a way that $\|\tilde \beta_j - \beta_j\|_{\ell_p} \ll \tau_j$ for all $j \leq J$.
(See \cref{bernstein_bound}.)

We let $J$ be such that $2^J \asymp n^{1/d}$, and set
\begin{equation*}
\tilde f^\circ \defeq \sum_{\phi \in \Phi} \tilde \alpha_\phi \phi + \sum_{0 \leq j \leq J} \left(\sum_{\psi \in \Psi_j} \tilde \beta_\psi \psi\right) \indic{\|\tilde \beta_j\|_{\ell_p} \geq \tau_j}
\end{equation*}
Finally, in order to obtain an estimator that is a probability density, we defined the projected estimator $\hat f^\circ$ by
\begin{equation*}
\hat f^\circ \defeq \min_{g \in \cD} \|g - \tilde f^\circ\|_{\cB^{-1}_{p, 1}}\,.
\end{equation*}
The analysis of $\hat f^\circ$ is standard, and follows ideas for adaptive estimation developed by~\citet{DonJohKer96}.
We defer the details to \cref{adaptivity_bound}.

\subsection{Lower bound}
Our lower bound follows almost directly from the bound proved by~\citet{KerPic92} to establish minimax rates for density estimation in $L_p$ over Besov spaces.
By the monotonicity of the Wasserstein-$p$ distances in $p$, it suffices to prove the lower bound for $p = 1$.

\begin{proof}[Proof of \cref{thm:estimation_lb}]
Given an index $J$ to be specified and a vector $\ep \in \{\pm 1\}^{|\Psi_J|}$, we write
\begin{equation*}
f_\ep \defeq 1 + \frac 12 \sum_{\psi \in \Psi_J} n^{-1/2} \ep_\psi \psi\,.
\end{equation*}
As long as $n^{-1/2} 2^{J(s + d/2)} \lesssim 1$, the functions $f_\ep$ all lie in $\bes{s}{p'}{q}(L; m)$.

Moreover, note that for any $\ep, \ep' \in \{\pm 1\}^{|\Psi_J|}$, \cref{prop:wavelet_lb} implies
\begin{equation*}
W_1({f_\ep}, {f_{\ep'}}) \gtrsim \|f_\ep - f_{\ep'}\|_{\bes{-1}{1}{\infty}} = 2^{-J(1+d/2)} n^{-1/2} \rho(\ep, \ep')\,.
\end{equation*}
where $\rho(\ep, \ep')$ is the Hamming distance between $\ep$ and $\ep'$.

Moreover, when $\rho(\ep, \ep') = 1$, the Hellinger distance satisfies
\begin{equation*}
\int (\sqrt{f_\ep} - \sqrt{f_{\ep'}})^2 \lesssim \int (f_\ep - f_{\ep'})^2 = n^{-1}
\end{equation*}

Therefore, since $W_1$ is a metric, a standard application of Assouad's lemma~\citep[Theorem 2.12]{Tsy09} implies that
\begin{align*}
\adjustlimits \inf_{\tilde \mu} \sup_{f \in \bes{s}{p'}{q}(L; m)} W_1(\hat \mu, {f}) & \gtrsim \adjustlimits  \inf_{\hat \ep} \sup_{\ep \in \{\pm 1\}^{|\Psi_J|}} W_1({f_{\hat \ep}}, {f_\ep})\\
& \gtrsim 2^{-J(1+d/2)} |\Psi_J| n^{-1/2} \\
& \gtrsim 2^{-J} 2^{dJ/2} n^{-1/2}\,,
\end{align*}
where the infimum is taken over all estimators $\tilde \mu$ constructed from $n$ samples and where the final inequality is a consequence of \cref{assume:size}.
Choosing $J$ such that $2^J \asymp n^{\frac{1}{d + 2s}}$ when $d \geq 2$ and $J = 0$ when $d = 1$ yields the claim.
\end{proof}

\section{General smooth densities}\label{sec:unbounded}
In this section, we give proofs for our theorems applying to general smooth densities (\cref{thm:unbounded_lb,thm:unbounded_ub}).
Our main results are matching upper and lower bounds showing showing that the rate of estimation over the class $\besspace$ is strictly worse than the rate over the class $\bounbes$ when $L$ is large enough that $\besspace \not\subseteq \bounbes$.

\subsection{Lower bounds}
We assume that $L$ is large enough that $\bes{s}{p'}{q}(L)$ contains a function $g_0$ whose support lies entirely inside $(0, 1/3)^d$. It is easy to see that this goal is indeed achievable by choosing $g_0$ to be suitable compactly supported smooth bump functions, as long as $L$ is a large enough constant.

The lower bound is based on the following lemma, which gives a lower bound on the Wasserstein distances for a pair of measures with disconnected support.
\begin{lemma}\label{lem:wp_separated_lb}
Let $\mu$ and $\nu$ be measures on $\RR^d$.
Suppose there exist two compact sets $S$ and $T$ such that $d(S, T) \geq c$ and such that the supports of $\mu$ and $\nu$ lie in $S \cup T$.
Then $W_p(\mu, \nu) \geq c |\mu(S) - \nu(S)|^{1/p}$.
\end{lemma}
\begin{proof}
Assume without loss of generality that $\mu(S) \geq \nu(S)$.
Then any coupling between $\mu$ and $\nu$ must assign mass at least $\mu(S) - \nu(S)$ to $S \times T$, so that $W_p^P(\mu, \nu) \geq c^p |\mu(S) - \nu(S)|$.
\end{proof}

The proof of \cref{thm:unbounded_lb} boils down to applying \cref{lem:wp_separated_lb} to appropriately chosen measures.

\begin{proof}[Proof of \cref{thm:unbounded_lb}]
We first prove the $n^{-1/2p}$ bound.
Let $g_0 \in \bes{s}{p'}{q}$ be supported in $[0, 1/3]^d$ and let $g_1$ be a translation of $g_0$ supported on $[2/3, 1]^d$.
For $\lambda \in [-1, 1]$, define $f_\lambda \defeq \frac 12 ((1+\lambda) g_0 + (1-\lambda)g_1)$.
Then for any $\lambda \in [-1, 1]$, the densities~$f_{-\lambda}$ and $f_{\lambda}$ satisfy
\begin{align*}
\int (\sqrt{f_\lambda(x)} - \sqrt{f_{-\lambda}(x)})^2 \dd x & = \frac 12 \int (\sqrt{1+\lambda} - \sqrt{1-\lambda})^2 (g_0(x) + g_1(x)) \dd x \\
& \lesssim \lambda^2\,.
\end{align*}
On the other hand, by \cref{lem:wp_separated_lb}, choosing $S = [0, 1/3]^d$ yields $W_p(f, {f'}) \gtrsim \lambda^{1/p}$.
Therefore, if we choose $\lambda \asymp n^{-1/2}$, then the claim follows from the method of \citet{Lec73}.

We now prove the $n^{-\frac{1 + s/p}{d + s}}$ bound. We proceed via Assouad's lemma. 

Let $g_0$ be as above. For $M > 1$ to be specified, the characterization of $\bes{s}{p'}{q}$ by $\|\cdot\|'_{\bes{s}{p'}{q}}$ implies that there exists a universal constant $c > 0$ such that $h(x) \defeq c M^{-s} g_0(Mx)$ also lies in $\bes{s}{p'}{q}(L)$. We denote by $\Gamma$ a set of vectors in $\RR^d$ such that for any $\gamma_1, \gamma_2 \in \Gamma$, the supports of the functions $h(x - \gamma_1)$, $h(x - \gamma_2)$, and $g_0$ are all separated by at least $c M^{-1}$ for $c$ a small constant. By a volume argument, we can choose $\Gamma$ such that $|\Gamma| \asymp M^d$. We assume that $|\Gamma|$ is even.

We divide the elements of $|\Gamma|$ into pairs and label them $\{(\gamma^+_i, \gamma^-_i)\}_{i = 1}^{|\Gamma|/2}$.
For any $\ep \in \{\pm 1\}^{|\Gamma|/2}$, define
\begin{equation*}
f_\ep \defeq \sum_{i \in |\Gamma|/2}  h(x - \gamma_i^{\ep_i}) + \kappa g_0\,,
\end{equation*}
where $\kappa \defeq 1 - \frac{|\Gamma|}{2} \int h(x) \dd x$ is chosen to ensure that $f_\ep$ integrates to $1$.
Since $\int h(x) \dd x \asymp M^{-s -d}$, the constant $\kappa$ is positive and $f_\ep$ is a density.

Given $\ep, \ep' \in \{\pm 1\}^{|\Gamma|/2}$, define
\begin{equation*}
\Delta(\ep, \ep') \defeq \{i : \ep_i = +1, \ep'_i = -1\}\,.
\end{equation*}
This is the subset of $\Gamma$ present in the density $f_\ep$ but not in $f_{\ep'}$.
We set
\begin{equation*}
S \defeq \bigcup_{i \in \Delta(\ep, \ep')} \supp(h(x - \gamma_i^+))\,.
\end{equation*}
If we denote by $\rho$ the Hamming distance, then the density $f_\ep$ assigns mass $|\Delta(\ep, \ep')| c M^{-s -d} \gtrsim \rho(\ep, \ep') M^{-s -d}$ to $S$, and $f_\ep'$ assigns zero mass to this set.
By construction, the rest of the support of $f_\ep$ and $f_\ep'$ lies at distance at least $c M^{-1}$ from $S$.
Therefore, by \cref{lem:wp_separated_lb}, 
\begin{equation*}
W_p({f_{\ep'}}, {f_\ep}) \gtrsim M^{-1} (\rho(\ep, \ep') M^{-s -d})^{1/p} \gtrsim \rho(\ep, \ep') M^{-\frac s p -1 - d}\,,
\end{equation*}
where the last inequality follows from the fact that $\rho(\ep, \ep') \leq |\Gamma|/2 \lesssim M^d$.
Moreover, if $\rho(\ep, \ep') =1$, then
\begin{equation*}
\int (\sqrt{f_\ep(x)} - \sqrt{f_{\ep'}(x)})^2 \dd x \leq \int |f_\ep(x) - f_{\ep'}(x)| \dd x \lesssim M^{-s -d}\,.
\end{equation*}

Therefore, if we choose $M \asymp n^{\frac{1}{s + d}}$, then Assouad's lemma \citep[Theorem 2.12]{Tsy09} and the fact that $W_p$ satisfies the triangle inequality imply
\begin{align*}
\adjustlimits\inf_{\hat \mu} \sup_{f_\ep} W_p(\hat \mu, {f_\ep}) & \gtrsim \adjustlimits\inf_{\hat \ep} \sup_{\ep} W_p({f_{\hat \ep}}, {f_\ep}) \gtrsim M^{- \frac sp -1} \gtrsim n^{-\frac{1 + s/p}{d +s}}\,.
\end{align*}
as claimed.
\end{proof}
\subsubsection{Upper bounds}
When $p = 1$, there is no longer any dependence on the lower bound $m$ in \cref{thm:besov_wp}, which suggests that the optimal rate of estimation in $W_1$ for general densities on $[0, 1]^d$ should correspond with the one obtained in \cref{thm:estimation_ub} for bounded densities.
And indeed, this is true: the results of \citet{UppSinPoc19} imply that the minimax rates of estimation for densities in $W_1$ is the same irrespective of whether the density is bounded below or not.
However, the duality argument that~\citet{UppSinPoc19} employ breaks down when $p \neq 1$, and the gap between the bounds in \cref{thm:unbounded_lb} and \cref{thm:estimation_ub} hints that the situation when $p \geq 2$ is significantly more subtle.

In this section, we give an estimation matching the rate of \cref{thm:unbounded_lb} rate up to a logarithmic factor when $s \in [0, 1)$ and $f \in \hol{s}{L}$.
Our estimator can only achieves the optimal rate for smoothness $s < 1$ because we rely on particular properties of the Haar wavelet basis.
While this basis is well suited to characterizing $\hol{s}{L}$ when $s < 1$, it cannot exploit further regularity of the unknown density. 
We suspect that a similar result holds for other, more regular choices of wavelet systems, but our techniques do not allow us to prove a more general result.

For $j \geq 0$, denote by $\Psi_j$ the elements of the $d$-dimensional Haar wavelet basis at scale $2^{-j}$~\citep[see][Section 2.3]{Tri10}.
Note that in the case of the Haar wavelet basis over the cube, the set $\Phi$ of scaling functions contains only the function which is identically $1$ on $[0, 1]^d$.

For $j \geq 0$, let $\cQ \defeq \bigcup_{j \geq 0} \cQ_j$ be the dyadic decomposition of $[0, 1]^d$, where $\cQ_j$ consists of a partition of $[0, 1]^d$ into cubes with sides of length $2^{-j}$.
Each element of $\Psi_j$ is supported on a single cube in $\cQ_j$.
We write $V_j$ for the span of $\left\{1 \cup \left(\bigcup_{k < j} \Psi_k\right)\right\}$ and denote by $K_j$ the orthogonal projection onto $V_j$.
The functions in $V_j$ are precisely those which are constant on the elements of $\cQ_j$.

The Haar basis has been used implicitly in existing bounds on the Wasserstein distance in what is known as the ``dyadic partitioning argument''~\citep{WeeBac18,FouGui15,BoiLe-14}.
Suppose that $\mu$ and $\nu$ are two measures on $[0, 1]^d$.
The bound of~\citet[Proposition 1]{WeeBac18} reads
\begin{equation}\label{eq:haar_expansion}
W_p^p(\mu, \nu) \lesssim \sum_{j \geq 0} 2^{-jp} \sum_{Q \in \cQ_j} |\mu(Q) - \nu(Q)|\,.
\end{equation}
When $\mu$ and $\nu$ possess densities $f$ and $g$, respectively, the expression on the right side of the above inequality is an expansion of $f - g$ with respect to the Haar wavelet basis, since $|\mu(Q) - \nu(Q)| = \|K_j (f - g)\|_{L_1(Q)}$.
Written in this way, the bound~\eqref{eq:haar_expansion} therefore implies
\begin{equation}\label{eq:projection_expansion}
W_p(f, g) \lesssim \sum_{j \geq 0} 2^{-j} \|K_j (f - g)\|_{L_1(\Omega)}^{1/p}\,.
\end{equation}

To achieve the optimal rate, we sharpen this bound significantly by observing that, when $f$ is bounded above and below on $Q$, the $L_1$ bound can be replaced by an $L_p$ bound.
We present the refined result as \cref{l1_lp_bound}.

\begin{theorem}\label{l1_lp_bound}
Let $J \geq 0$ be an integer, and let $f$ and $g$ be probability densities satisfying $g/f \in V_J$ (with the convention $0/0 = 0$).
Then
\begin{equation*}
W_p(f, g) \lesssim \sum_{0 \leq j < J} 2^{-j} \left(\sum_{Q \in \cQ_j} \|K_{j+1}(f - g)\|_{L_1(Q)} \wedge m_{Q}^{1-p} \Delta_Q^p \|K_{j+1}(f - g)\|_{L_p(Q)}^p\right)^{1/p}\,.
\end{equation*}
where
\begin{align*}
m_Q \defeq \frac{\int_Q g}{\int_Q f} \cdot \inf_{x \in Q} f(x) \quad \quad \text{and} \quad \quad
\Delta_Q  \defeq \frac{\sup_{x \in Q} f(x)}{\inf_{x \in Q} f(x)}\,.
\end{align*}
\end{theorem}
The quantities $m_Q$ and $\Delta_Q$ control the variation of $f$ on $Q$.
The first is a weighted version of the quantity $m$ appearing in \cref{prop:wavelet_ub}.
The second is small as long as $f$ does not change too much on $Q$, which is the case as long as $f$ is sufficiently smooth.
The proof of \cref{l1_lp_bound} appears in \cref{app:refined}.

In practice, to apply \cref{l1_lp_bound} to bound the Wasserstein distance between arbitrary probability measures $\mu$ and $\nu$, we can find a measure $\bar \nu$ such that $\mu$ and $\bar \nu$ satisfy the assumptions of \cref{l1_lp_bound} and $W_p(\bar \nu, \nu)$ is small.
We adopt this strategy in the proof of the \cref{thm:unbounded_ub}, which appears in \cref{unbounded_ub_proof}.

\section{Computational aspects}\label{sec:comp}

One of the motivations for this line of work is found in applications of optimal transport techniques for data analysis and machine learning, with unknown distributions and access to an independent sample of size $n$.

Many so-called {\em variational Wasserstein problems} involve the problem of minimizing a functional $F:\nu \mapsto W_p(\nu,\mu)$ with unknown $\mu$.
These problems, such as minimum Kantorovich estimators~\citep{BasBodReg06} and Wasserstein barycenters~\citep{AguCar11}, are increasingly common in practical applications~\citep{PeyCut17}, especially when the minimization is taken over a parametric class, with $\nu = \nu_\theta$ for $\theta \in \Theta$.

Solving variational Wasserstein problems in practice requires first obtaining an empirical estimate of the functional $F$ on the basis of data drawn from $\mu$, and then writing the resulting optimization problem in a computationally tractable form. The first issue is typically addressed by obtaining an estimator $\tilde \mu_n$ of $\mu$ and estimating the function via the plug-in principle.
Indeed, the triangle inequality implies that
\[
\sup_{\nu \in \cP}|W_p(\nu,\mu) - W_p(\nu,\tilde \mu_n)| = W_p(\mu, \tilde \mu_n)\, ,
\]
where the supremum is taken over all probability distributions and where equality is achieved at $\mu = \nu$.
Following this approach, guarantees in Wasserstein distance between $\mu$ and the estimator $\tilde \mu_n$ therefore yield uniform deviation bounds for these functionals over the set of probability measures $\cP$ on $\RR^d$.

To solve the resulting optimization problem, finite discretizations are often taken for $\nu$ and $\tilde \mu_n$ to render the resulting problem amenable to discrete optimization techniques~\citep{Cut13,AltWeeRig17}.
For this reason, the estimator $\tilde \mu_n$ is often taken to be the empirical distribution $\hat \mu_n$, since this measure is a finitely supported measure and enjoys the rate
\begin{equation*}
\E W_p(\mu, \hat \mu_n) \lesssim n^{-1/d}\,,
\end{equation*}
as long as $d > 2p$, which is minimax optimal over the class of compactly supported probability measures~\citep{SinPoc18}.

However, in \cref{sec:bounded,sec:unbounded}, we show that under natural regularity assumptions for $\mu$, estimators based on density estimation statistically outperform the empirical distribution. Indeed, focusing on the regime $d > 2p$, \cref{thm:estimation_ub,thm:unbounded_ub} yield guarantees of the form

\[
\E W_p(\mu,\tilde \mu_n) \lesssim n^{-\gamma^*(s)/d}\, ,
\]
where $\gamma^*(s) \geq 1$ increases as the smoothness of $\mu$ increases.

These results are summarized in the following table, highlighting that the optimal exponent $\gamma^*(s)/d$ interpolates between $1/d$ (for $s=0$) a dimension-free rate (for $s$ going to $\infty$). The value $s<1$ is of special interest, as it corresponds to the smoothness which can be exploited by a histogram estimator, which is most relevant for computational aspects. We take this point up further below.

\begin{center}
\begin{tabular}{|c|c|c|c|c|}
\hline
\rule{0pt}{3ex} Nonparametric class & optimal $\gamma^*(s)$ & $\gamma^*(0)/d$  & $\gamma^*(1)/d$& $\gamma^*(\infty)/d$\\
\hline
\rule{0pt}{3ex} $\bounbes$ & $\frac{1+s}{1+2s/d}$ & $1/d$ &  $\frac{2}{d+2}$ & $1/2$\\ [1ex]
\rule{0pt}{3ex} $\besspace,  \,\, s < 1$ & $\frac{1+s/p}{1+s/d}$ & $1/d$ & $\frac{1+1/p}{d+2}$  & ---\\[1ex]
\hline
\end{tabular}
\end{center}

Note that the exponent $\gamma^*(s)$ is greater than 1 for all $s > 0$. In the light of these results, there is an apparent tension between two objectives: statistical precision and computational efficiency. On the one hand, using atomic mesures as estimators of unknown distributions allows to efficiently compute Wasserstein distances: the optimal transport problem reduces to a linear program in finite dimension. Their statistical performance can however be suboptimal, as shown in the table above: for smooth densities the lower bound in $n^{-1/d}$ applies to all $n$-atomic distributions~\citep{Dud69}. On the other hand, estimators with smooth densities have minimax optimal statistical precisions, but are defined through a projection in Besov norm of a wavelet-based density estimator. Even in the simple case of histograms (piecewise-constant densities), there is no explicit or simple way to solve optimal transport problems.

We therefore propose a procedure to leverage the regularity of the distribution, and to handle our proposed estimators in a computationally efficient manner. The idea is to exploit the best of both worlds, by creating an atomic measure with $M >n$ atoms, based on a density estimator. They statistically outperform the empirical measure, and optimal transport problems can be explicitly solved on these measures. If it is possible to efficiently sample from one of these estimators, it is always possible to extract an atomic distribution out of it, as described in the following
\begin{defin}
Let $\tilde \mu_n$ be a probability measure from which one can draw sample points, and let $Z_1,\ldots,Z_M$ be an i.i.d. sample from $\tilde \mu_n$. We denote by $\bar \mu_{n,M}$ the {\em estimator resample distribution}, the empirical distribution of the $Z_i$s.
\end{defin}
The distribution $\bar \mu_{n,M}$ is ``simply'' the empirical distribution of a sample of size $M$ from a distribution. However, we retain $n$ in the notation, to highlight that the $Z_i$s are themselves drawn from an estimator based on a sample of size $n$ from an unknown $\mu$. We recall the following result for compactly supported distributinos.
\begin{prop}[\citealp{FouGui15}]
For $d > 2p$, the estimator resample distribution $\bar \mu_{n,M}$ satisfies
$
\E W_p(\tilde \mu_n, \bar \mu_{n,M}) \lesssim M^{-1/d}.
$
\end{prop}
As a consequence of this result, resampling from the estimated distribution yields an atomic measure as close in Wasserstein distance to the original estimator as desired, since $M$ can be chosen by the statistician. 
As the following makes clear, by choosing $M > n$, it is possible to obtain atomic distributions with estimation error as good as the original estimator $\tilde \mu_n$.
\begin{cor}\label{resample}
Assume $d > 2p$.
Let $\mu$ be in a nonparametric class such that there exists an estimator $\tilde \mu_n$ from which one can efficiently sample, and such that
\[
\E W_p(\tilde \mu_n, \mu) \lesssim n^{-\gamma^*/d}
\]

For any $\gamma \in [1,\gamma^*]$, the estimator resample distribution $\bar \mu_{n,M}$ with $M=n^\gamma$ satisfies
\[
\E W_p(\mu, \bar \mu_{n,M}) \lesssim n^{-\gamma/d}\, .
\]
\end{cor}
\begin{proof}
We have by the triangle inequality that
\begin{align*}
\E W_p(\mu, \bar \mu_{n,M}) &\le \E W_p(\mu,\tilde \mu_n) + \E W_p(\tilde \mu_n, \bar \mu_{n,M}) \lesssim n^{-\gamma^*/d} + M^{-1/d} \lesssim n^{-\gamma/d}\, ,
\end{align*}

having taken $M$ of order $n^\gamma$.
\end{proof}

It is now straightforward to give a proof of \cref{thm:ub_comp}.
\begin{proof}[Proof of \cref{thm:ub_comp}]
We consider the estimator resample distribution constructed from the histogram estimator give by \cref{thm:unbounded_ub}.
If $d \leq 2p$, then \citet[Theorem 1]{FouGui15} implies that the empirical distribution $\hat \mu_n$ already achieves the rate appearing in \cref{thm:unbounded_ub} up to a logarithmic factor; therefore, choosing $M = n$ suffices.
If $d > 2p$, we apply \cref{resample}. \Cref{thm:unbounded_ub} implies that, up to logarithmic factors, the histogram estimator $\mu_{\hat f}$ achieves the rate $n^{-\gamma^*/d}$ with $\gamma^* = \frac{1 + \frac{s}{p}}{1 + \frac{s}{d}} \wedge \frac{d}{2p}$.
Choosing $\gamma = \gamma^*$ and noting that $\gamma^* < 2$ yields the claim.

Finally, to show that constructing $\bar \mu_{n, M}$ takes time $O(M)$, we note that the histogram estimator constructed in \cref{thm:unbounded_ub} is piecewise constant with $O(n)$ pieces. Constructing the histogram estimator takes $O(n)$ time, and sampling $M$ points from a histogram distribution can be done in time $O(n + M) = O(M)$ by the alias method~\citep{KroPet79}.
\end{proof}

In some examples, the choice of $\gamma^*$ can be left to the practitioner. For our estimators, this corresponds to choosing the depth of the wavelet decomposition. Taking piecewise constant estimators (histograms) limits the exponent $\gamma$ to $\gamma^*=\gamma^*(1)$. In any case, it is also possible to chose $M=n^{\gamma}$ for $\gamma \in (1,\gamma^*(1)]$, and let the approximation error $n^{-\gamma/d}$ dominate the statistical error $n^{-\gamma^*(1)/d}$.

Using the estimator resample distribution $\bar \mu_{n,M}$ instead of $\hat \mu_n$ requires solving optimal transport problems of size $M=n^\gamma$ instead of $n$.
This naturally increases the computational cost. This motivates the question of quantifying the statistical and computational tradeoffs of our proposal. The dependency of the algorithmic cost of solving optimal transportation problems on the size of the distribution is the subject of a large literature~\citep[see][]{PeyCut17}, from which we extract a simple bound.

\begin{prop}[\citealp{AltWeeRig17,DvuGasKro18}]\label{prop:sink}
Given two distributions $\alpha$ and $\beta$ supported on at most $M$ atoms on a set of diameter $1$, an additive $\ep$ approximation to $W^p_p(\alpha, \beta)$ can be computed in time $O(M^2 \log(M) / \varepsilon^2)$. 
 \end{prop}

The following describes the interplay between statistical precision and computational efficiency for 
\begin{theorem}
Let $\mu$ be in a nonparametric class such that there exists an estimator $\tilde \mu_n$ from which one can efficiently sample, and such that
\[
\E W_p(\tilde \mu_n, \mu) \lesssim n^{-\gamma^*/d}
\]
Given a sample of size $n$ from $\mu$ and known $\nu$, for any $\gamma \in [1,\gamma^*]$ an estimate $\tilde W_{p,n}$ of $W_p(\nu,\mu)$ satisfying
\[
\E |\tilde W_{p,n} - W_p(\nu,\mu)| \lesssim n^{-\gamma/d}
\]
can be computed in time $O\big(n^{\gamma(2+2/d)} \log(n)\big)$.
\end{theorem}
\begin{proof}
Taking $\tilde \mu_n$ such that $\E W_p(\mu,\tilde \mu_n) \le n^{-\gamma^*/d}$, $\bar \mu_{n,M}$ the empirical resample distribution of $\tilde \mu_n$, and $\bar \nu_M$ an $M$-atomic version of $\nu$ (obtained by sampling from or discretizing $\nu$), we have
\begin{align*}
\E |W_p(\bar \nu_M, \bar \mu_{n,M}) - W_p(\nu,\mu)| &\le \E W_p(\mu, \bar \mu_{n,M}) + \E W_p(\nu, \bar \nu_{M})\\
&\lesssim n^{-\gamma^*/d} + M^{-1/d} \lesssim n^{-\gamma/d} \, ,
\end{align*}
by taking $M=n^{\gamma}$. Taking $\varepsilon = n^{-\gamma/d}$, computing an $\varepsilon$-approximation $\tilde W_{p,n}^p$ to $W^p_p(\bar \nu_M, \bar \mu_{n,M})$ given $\bar \mu_{n,M}$ and $\bar \nu_M$ takes time $O\big(n^{\gamma(2+2/d)} \log(n)\big)$ by \cref{prop:sink}.
Finally, the inequality
$$|\tilde W_{p, n} - W_p(\bar \nu_M, \bar \mu_{n,M})| \leq |\tilde W^p_{p, n} - W^p_p(\bar \nu_M, \bar \mu_{n,M})| \leq \varepsilon = n^{-\gamma/d}$$
yields the claim.
\end{proof}

Taking $\gamma=1$, $\tilde \mu_n = \hat \mu_n$ and $M=n$ with $\bar \mu_{n,M} = \hat \mu_n$ (without resampling) is always possible. It yields an algorithm that outputs a $n^{-1/d}$ approximation in time $O\big(n^{2+2/d} \log(n)\big)$. However, whenever another estimator $\tilde \mu_n$ with precision $n^{-\gamma/d}$ exists for $\gamma \in (1,\gamma^*(s)]$, it is possible to obtain a better approximation with error $\varepsilon_n(\gamma) = n^{-\gamma/d}$ in time of order $T_n(\gamma)=n^{\gamma(2+2/d)}$, up to terms logarithmic in $n$. This quantifies the computational cost for added statistical precision. 
We summarize these results in the following table, for $\gamma^*= \gamma^*(1)$ for histogram estimators (from which it is easy to sample).
The parameter $\gamma$ can be taken in the full range from $1$ to $\gamma^*(1)$.
\begin{center}
\begin{tabular}{|c|c|c|c|}
\hline
\rule{0pt}{3ex} $\gamma$ & precision  $n^{-\gamma/d}$& $M = n^\gamma$ & time $n^{\gamma(2+2/d)} \log(n)$ \\
\hline
\rule{0pt}{3ex} $\gamma=1$ & $n^{-\frac{1}{d}}$ & $n$ & $n^{2+2/d} \log(n)$ \\ [1ex]
\rule{0pt}{3ex} $\gamma^*(1) = \frac{1+1/p}{1+1/d}$ & $n^{-\frac{1+1/p}{d+1}}$ & $n^{\frac{1+1/p}{1+1/d}}$ & $n^{2+2/p} \log(n)$  \\[1ex]
\hline
\end{tabular}
\end{center}

In the high-dimensional limit, 
we obtain that a histogram estimator can improve the exponent in the precision by a factor $\gamma$ of nearly 2 at the price of increasing the exponent in the running time by nearly the same factor.
The choice of $M$, which can be left to the statistician, determines the value of $\gamma$.
 
\appendix
\numberwithin{equation}{section}
\numberwithin{lemma}{section}
\numberwithin{assume}{section}
\section{Omitted proofs}\label{app:proofs}
\subsection{Proof of \cref{interp_lb}}
The bound is invariant under the reparametrization $t \mapsto 1 - t$, so it suffices to prove the claim for $t \leq 1/2$.
By definition, we have
\begin{equation*}
\rho(x, t) = (1 - \lambda(t))f(x) + \lambda(t) g(x) \geq  \lambda(t) (f(x) + g(x))\,,
\end{equation*}
where we have used the fact that $1 - \lambda(t) \geq \lambda(t)$ when $t \leq 1/2$.
Since $f(x) + g(x) \geq f(x) \vee g(x) \geq m$, the claim follows.
\qed

\subsection{Proof of \cref{lem:vphi_divergence}}
The partial derivatives of this vector field satisfy
\begin{align*}
\frac{\mathrm{d}}{\mathrm{d}x_1} (V_\phi)_1 & = \phi^{(1)}(x^{(1)}) \\
\frac{\mathrm{d}}{\mathrm{d}x_i} (V_\phi)_i & = \phi^{(i)}(x^{(i)}) -  \phi^{(i-1)}(x^{(i-1)}) \quad \quad 1 < i \leq d\,,
\end{align*}
so that $\nabla \cdot V = \phi$ on the interior of $[0, 1]^d$.

On the boundary $x_i = 0$ we have $(V_\phi)_i = 0$ for all $i$, and for $i > 1$ on the boundary $x_i = 1$ we have
\begin{equation*}
(V_\phi)_i = \int_0^1 \phi^{(i)}(x^{(i-1)}, t_i) \dd t_i - \phi^{(i-1)}(x^{(i-1)}) = 0\,.
\end{equation*}
Finally, on $x_1 = 1$, we have $(V_\phi)_1 = \int_0^1 \phi^{(1)}(t_1) \dd t_1 = \int_{[0, 1]^d} \phi(t) \dd t$.
Since $f$ and $g$ are both probability densities and the constant function $1$ is in the span of $\Phi$, we have $\int_{[0, 1]^d} f(x) \dd x = \int_{[0, 1]^d} \sum_{\phi \in \Phi} \alpha_\phi \phi(x) \dd x = 1$, and, analogously, $\int_{[0, 1]^d} \sum_{\phi \in \Phi} \alpha'_\phi \phi(x) \dd x = 1$.
We therefore obtain that $\sum_{\phi \in \Phi} (\alpha_\phi - \alpha'_\phi) (V_\phi)_1 = 0$ on the face $x_1 = 0$; thus, the boundary conditions hold as well.
\qed
\subsection{Proof of \cref{lem:vpsi_divergence}}
By construction, $\nabla \cdot V_\psi = \frac{d (V_\psi)_k}{d x_k} = \psi$ on the interior of $[0, 1]^d$.
The equality $(V_\phi)_i  = 0$ holds on the boundaries $x_i = 0$ and $x_i =1$ for $i \neq k$, and by construction $(V_\phi)_k = 0$ on $x_k = 0$.
Finally, since $\int_0^{1} \psi_k(t) \dd t = 0$, we also have $(V_\phi)_k = 0$ on $x_1 = 1$, and thus the boundary conditions are satisfied.\qed

\subsection{Proof of \cref{lem:vphi_norm_estimate}}
We will show that $\left\|\sum_{\phi \in \Phi} \alpha_\phi V_\phi\right\|_{L_p([0, 1]^d)} \lesssim \left\|\sum_{\phi \in \Phi} \alpha_\phi \phi\right\|_{L_p([0, 1]^d)}$ and conclude by appealing to the stability property of the scaling functions.

The definition of $V_\phi$ implies that
\begin{align*}
\left|\sum_{\phi \in \Phi} \alpha_\phi (V_\phi)_i(x) \right|^p & = \left| \int_0^{x_i} \sum_{\phi \in \Phi} \alpha_\phi \phi^{(i)}(x^{(i-1)}, t_i) \dd t_i - x_i \sum_{\phi \in \Phi} \alpha_\phi \phi^{(i-1)}(x^{(i-1)})\right|^p \\
& \leq 2^{p-1} \left| \int_0^{x_i} \sum_{\phi \in \Phi} \alpha_\phi \phi^{(i)}(x^{(i-1)}, t_i) \dd t_i\right|^p + 2^{p-1} x_i \left|\sum_{\phi \in \Phi} \alpha_\phi \phi^{(i-1)}(x^{(i-1)})\right|^p \\
& \lesssim \int_0^1\dots \int_0^1 \left|\sum_{\phi \in \Phi} \alpha_\phi \phi(x^{(i-1)}, t_i, \dots, t_d)\right|^p \dd t_{i} \cdots \dd t_d\,,
\end{align*}
where in the second inequality we use Jensen's inequality and the fact that $x_i \leq 1$.

We obtain
\begin{align*}
\left\|\sum_{\phi \in \Phi} \alpha_\phi V_\phi\right\|_{L_p}^p  \lesssim \max_{i \in [d]} \int_\Omega \left|\sum_{\phi \in \Phi} \alpha_\phi (V_\phi)_i \right|^p
 \lesssim \int_{\Omega} \left|\sum_{\phi \in \Phi} \alpha_\phi \phi\right|^p
 = \left\|\sum_{\phi \in \Phi} \alpha_\phi \phi\right\|^p_{L_p}
\end{align*}

The claim then follows from \cref{assume:stability}.\qed
\subsection{Proof of \cref{lem:vpsi_norm_estimate}}
By \cref{assume:locality}, there exists an interval $I$ with $|I| \lesssim 2^{-j}$ such that $\supp(\psi_i) \subseteq I$.
Since $(V_\psi)_i(x_i) = 0$ if $x \notin I$, H\"older's inequality implies that
\begin{align*}
\left| \int_0^{x_i} \psi_i(t) \dd t\right|^p & = \1\{x_i \in I\} \left| \int_0^{x_i} \psi_i(t) \1\{t \in I\} \dd t\right|^p \\
& \leq |I|^{p-1} \1\{x_i \in I\} \int_0^{x_i} |\psi_i(t)|^p \dd t \\
& \lesssim 2^{-j(p-1)} \1\{x_i \in I\} \int_0^1 |\psi_i(t)|^p \dd t\,.
\end{align*}
We therefore obtain by \cref{assume:norm} that
\begin{equation*}
\int_{[0, 1]^d} \|V_\psi\|^p \lesssim 2^{-j(p-1)} 2^{-j} \int_{[0, 1]^d} |\psi|^p \lesssim 2^{-jp} 2^{pdj(\frac 12 - \frac 1p)}\,.
\end{equation*}

The construction of $V_\psi$ and \cref{assume:locality} imply that $V_\psi(x) = 0$ if $x \notin I_\psi$, where $\left\|\sum_{\psi \in \Psi_j} \1\{x \in I_\psi\}\right\|_{L_\infty} \lesssim 1$.
H\"older's inequality therefore yields
\begin{align*}
\left\|\sum_{\psi \in \Psi_j} \beta_\psi V_\psi\right\|^p_{L_p} & \leq \int_{[0, 1]^d} \left(\sum_{\psi \in \Psi_j} |\beta_\psi| \|V_\psi\| \1\{x \in I_\psi\} \right)^p \dd x \\
&  \leq \sum_{\psi \in \Psi_j} |\beta_\psi|^p \int_{[0, 1]^d} \|V_\psi\|^p \left(\int_{[0, 1]^d}\sum_{\psi \in \Psi_j} \1\{x \in I_\psi\}\dd x\right)^{p-1} \\
& \lesssim 2^{-jp} 2^{pdj(\frac 12 - \frac 1p)} \|\beta_j\|_{\ell_p}^p\,.
\end{align*}\qed

\subsection{Proof of \cref{lem:h_estimate}}
This follows directly from the assumptions on the multiresolution analysis:
\begin{align*}
\|\nabla h\|_{L_{p'}(\Omega)} & \leq \left\|\nabla \sum_{\phi \in \Phi} \kappa_\phi \phi\right\|_{L_{p'}(\Omega)} + \left\|\nabla \sum_{\psi \in \Psi_j} \lambda_\psi \psi  \right\|_{L_{p'}(\Omega)} \\
& \lesssim \left\|\sum_{\phi \in \Phi} \kappa_\phi \phi\right\|_{L_{p'}(\Omega)} + 2^j\left\|\sum_{\psi \in \Psi_j} \lambda_\psi \psi  \right\|_{L_{p'}(\Omega)} \\
& \lesssim \|\kappa\|_{\ell_{p'}} + 2^{j + dj(\frac 12 - \frac{1}{p'})} \|\lambda\|_{\ell_{p'}} \\
& \leq 1 + 2^{dj(1 - \frac 1p - \frac{1}{p'})} = 2\,,
\end{align*}
where the inequalities follow, respectively, from the triangle inequality, \cref{assume:bernstein}, \cref{assume:stability}, and the assumption on $\kappa$ and $\lambda$.
\qed

\subsection{Proof of \cref{prop:moment_bounds}}
We first show the claim for $\|\beta_j - \tilde \beta_j\|_{\ell_p}$, $j \geq 0$.
The inequalities of \citet{Ros72} imply that there exists a constant $c_p$ such that for any $\psi \in \Psi_j$,
\begin{align*}
\E |\beta_\psi - \tilde \beta_\psi|^p \leq c_p \left(\frac{\sigma_\psi^p}{n^{p/2}} + \frac{\E \left|\psi(X) - \E \psi(X)\right|^p}{n^{p-1}}\1\{p \geq 2\}\right)\,,
\end{align*}
where $\sigma_\psi^2 \defeq \E \left|\psi(X) - \E \psi(X)\right|^2$ and $X \sim f$.

\Cref{assume:norm} implies that $\|\psi\|_{L_\infty} \lesssim 2^{dj/2}$, so
\begin{equation*}
\frac{\E \left|\psi(X) - \E \psi(X)\right|^p}{n^{p-1}} \lesssim \frac{2^{dj(p-2)/2} \sigma_\psi^2}{n^{p-1}} \leq \frac{\sigma_\psi^2}{n^{p/2}}\,,
\end{equation*}
where the last inequality follows from the assumption that $n \geq 2^{dj}$.

In order to establish that $\E \|\beta_j - \hat \beta_j\|_{\ell_p}^p \leq \frac{2^{dj}}{n^{p/2}}$, it therefore suffices to show that
\begin{equation*}
\sum_{\psi \in \Psi_j} (\E \left|\psi(X) - \E \psi(X)\right|^2)^{k/2} \leq 2^{dj}\,,
\end{equation*}
for $k \in \{2, p\}$.

We have
\begin{equation*}
(\E \left|\psi(X) - \E \psi(X)\right|^2)^{k/2} \leq (\E \psi(X)^2)^{k/2} = \left(\int_\Omega \psi(x)^2 f(x) \dd x\right)^{k/2} \leq \int_\Omega \psi(x)^2 f(x)^{k/2} \dd x\,,
\end{equation*}
where in the last step we use Jensen's inequality combined with the fact that $\int \psi^2 = 1$ for all $\psi \in \Psi_j$ (see \cref{assume:basis}).
Finally, we obtain
\begin{align*}
\sum_{\psi \in \Psi_j} (\E \left|\psi(X) - \E \psi(X)\right|^2)^{k/2} & \leq \int_\Omega \sum_{\psi \in \Psi_j} \psi(x)^2 f(x)^{k/2} \dd x \\
& \leq \left\|\sum_{\psi \in \Psi_j} \psi(x)^2 \right\|_{L_\infty} \|f\|^{k/2}_{L_{k/2}}
\end{align*}
The fact that $f \in \bes{s}{p'}{q}(L; m)$ where $1 \leq p \leq p'$ implies that $\|f\|^{k/2}_{L_{k/2}} \leq \|f\|^{p'}_{L_{p'}} \lesssim 1$, and \cref{assume:locality,assume:norm} imply $\left\|\sum_{\psi \in \Psi_j} \psi(x)^2 \right\|_{L_\infty} \lesssim 2^{dj}$. This proves the claim.

To show the analogous bound on $\|\alpha - \tilde \alpha\|_{\ell_p}$, we again write
\begin{equation*}
\E |\alpha_\phi - \tilde \alpha_\phi|^p \leq c_p\left(\frac{\sigma_\phi^p}{n^{p/2}} + \frac{\E \left|\phi(X) - \E \phi(X)\right|^p}{n^{p-1}}\1\{p \geq 2\}\right)
\end{equation*}
for any $\phi \in \Phi$, where $\sigma^2 \defeq \left|\phi(X) - \E \phi(X)\right|^2$.
Since $\|\phi\|_{L_\infty} \lesssim 1$ for any $\phi \in \Phi$, this bound simplifies to
\begin{equation*}
\E |\alpha_\phi - \tilde \alpha_\phi|^p \lesssim \frac{1}{n^{p/2}}\,,
\end{equation*}
and hence
\begin{equation*}
\E \|\alpha - \hat \alpha\|_{\ell_p}^p \leq  \frac{\left|\Phi\right|}{n^{p/2}} \lesssim \frac{1}{n^{p/2}}
\end{equation*}
by \cref{assume:size}.
\qed

\section{Adaptivity bound}\label{adaptivity_bound}
Let us first justify the definition of $\tau_j$, by showing that it gives an upper bound on $\|\tilde \beta_j - \beta_j\|_{\ell_p}$ with high probability.
\begin{lemma}\label{bernstein_bound}
Suppose $f \in \bes{s}{p'}{q}(L)$ and $\tau_j$ is defined as in \eqref{tau_j}.
For any $j \geq 0$, if $n \geq 2^{dj}$, it holds
\begin{equation}\label{tau_tail_bound}
\p[ \|\tilde \beta_j - \beta_j\|_{\ell_p} \geq c \tau_j] \lesssim e^{-2cdj} \quad \forall{c \geq 1/2}\,.
\end{equation}
Moreover,
\begin{equation}\label{tau_expectation_bound}
\E \|\tilde \beta_j - \beta_j\|_{\ell_p}^2 \lesssim \tau_j^2\,.
\end{equation}
\end{lemma}
\begin{proof}
For any $\psi \in \Psi_j$, Bernstein's inequality~\citep{BouLugMas13} yields
\begin{equation*}
\p[|\tilde \beta_\psi - \beta_\psi| \geq t) \leq 2 \exp(- nt^2/(2 \sigma_\psi^2 + 2 \|\psi\|_{L_\infty} t/3))\,,
\end{equation*}
where $\sigma_\psi^2$ denotes the variance of $\psi(X)$ when $X \sim f$.
In particular, this yields that
\begin{equation*}
\p\left[|\tilde \beta_\psi - \beta_\psi| \geq 2 \frac{\sigma_\psi \sqrt t}{\sqrt n} + 2 \frac{\|\psi\|_{L_\infty} t}{n}\right] \leq 2 e^{-t}\,.
\end{equation*}
Taking a union bound over all $\psi \in \Psi_j$ yields
\begin{equation*}
\p \left[ \|\tilde \beta_j - \beta_j\|_{\ell_p} \geq 2 \left(\sum_{\psi \in \Psi_j} \sigma_\psi^p\right)^{1/p}\frac{\sqrt t}{\sqrt n} + 2 \left(\sum_{\psi \in \Psi_j} \|\psi\|_{L_\infty}^p\right)^{1/p} \frac t n\right] \leq 2 |\Psi_j| e^{-t}\,.
\end{equation*}
As in the proof of \cref{prop:moment_bounds}, there exists a constant $K_0$ depending only on the wavelet basis such that $\sum_{\psi \in \Psi_j} \sigma_\psi^p \leq K_0^p L^p 2^{dj}$.
By \cref{assume:norm,assume:size}, there exists a constant $K_1$ depending on the wavelet basis such that $\left(\sum_{\psi \in \Psi_j} \|\psi\|_{L_\infty}^p\right)^{1/p} \leq K_1 2^{dj (\frac 12 + \frac 1p)}$.
Likewise, \cref{assume:size} implies the existence of a constant $K_2$ such that $|\Psi_j| \leq K_2 2^{dj}$.

Therefore, setting $K = \max\{K_0, K_1, K_2\}$ and using the assumption that $n \geq 2^{dj}$ implies
\begin{equation*}
\p\left[ \|\tilde \beta_j - \beta_j\|_{\ell_p} \geq 2 K  2^{dj/p} \frac{L \sqrt t + t}{\sqrt n}\right] \leq 2 K 2^{dj} e^{-t}\,.
\end{equation*}
Choosing $t = 4 cdj$ and applying the definition of $\tau_j$ yields~\eqref{tau_tail_bound}.

The bound~\eqref{tau_expectation_bound} follows from integrating the estimate~\eqref{tau_tail_bound}.
\end{proof}

We now turn to the proof of \cref{thm:estimation_ub_adaptive}.
By an argument analogous to the one given in \cref{lem:wp_to_bes}, it suffices to bound
\begin{equation*}
\E \|f - \tilde f\|_{\cB^{-1}_{p, q}} = \E \sum_{j \geq 0} 2^{-j} 2^{dj(\frac 12 - \frac 1p)} \left(\|\tilde \beta_j - \beta_j\|_{\ell_p} \indic{\|\tilde \beta_j\|_{\ell_p} \geq \tau_j} + \|\beta_j\|_{\ell_p} \indic{\|\tilde \beta_j\|_{\ell_p} < \tau_j}\right)
\end{equation*}

We require two simple lemmas bounding the expectations of each summand.
\begin{lemma}\label{trunc}
For any $j \geq 0$,
\begin{equation*}
\E \|\beta_j\|_{\ell_p} \indic{\|\tilde \beta_j\|_{\ell_p} < \tau_j} \lesssim \tau_j \wedge \|\beta_j\|_{\ell_p}\,.
\end{equation*}
\end{lemma}
\begin{proof}
We compute
\begin{align*}
\E \|\beta_j\|_{\ell_p} \indic{\|\tilde \beta_j\|_{\ell_p} < \tau_j} & \leq \E \|\beta_j - \tilde \beta_j\|_{\ell_p} + \E \|\tilde \beta_j\|_{\ell_p} \indic{\|\tilde \beta_j\|_{\ell_p} < \tau_j} \\
& \leq \E \|\beta_j - \tilde \beta_j\|_{\ell_p} + \tau_j \\
& \lesssim \tau_j\,,
\end{align*}
where the last inequality follows from \cref{bernstein_bound},~\eqref{tau_expectation_bound}.
Combining this estimate with the trivial bound $\E \|\beta_j\|_{\ell_p} \indic{\|\tilde \beta_j\|_{\ell_p} < \tau_j} \leq \|\beta_j\|_{\ell_p}$ gives the claim.

\end{proof}

\begin{lemma}\label{deviation}
For any $j \geq 0$,
\begin{equation*}
\E \|\tilde \beta_j - \beta_j\|_{\ell_p} \indic{\|\tilde \beta_j\|_{\ell_p} \geq \tau_j} \lesssim \left\{
\begin{array}{ll}
2^{-dj/2} \tau_j & \text{if $\|\beta_j\|_{\ell_p} < \frac 12 \tau_j$.} \\
\tau_j & \text{otherwise}
\end{array}\right.
\end{equation*}
\end{lemma}
\begin{proof}
If $\|\beta_j\|_{\ell_p} < \frac 12 \tau_j$, we use the Cauchy-Schwarz inequality and \cref{bernstein_bound} to obtain
\begin{align*}
\E \|\tilde \beta_j - \beta_j\|_{\ell_p} \indic{\|\tilde \beta_j\|_{\ell_p} \geq \tau_j} & \leq \E \|\tilde \beta_j - \beta_j\|_{\ell_p} \indic{\|\tilde \beta_j - \beta_j\|_{\ell_p} \geq \frac 12 \tau_j} \\
& \leq \p\Big[\|\tilde \beta_j - \beta_j\|_{\ell_p} \geq \frac 12 \tau_j\Big]^{1/2} \left(\E \|\tilde \beta_j - \beta_j\|_{\ell_p}^2\right)^{1/2}  \\
& \lesssim 2^{-dj/2} \tau_j \,.
\end{align*}

For the second bound, we simply use use $\E \|\tilde \beta_j - \beta_j\|_{\ell_p} \indic{\|\tilde \beta_j\|_{\ell_p} < \tau_j} \leq \E \|\tilde \beta_j - \beta_j\|_{\ell_p} \lesssim \tau_j$.
\end{proof}

We are now ready to complete the proof.
\begin{proof}[Proof of \cref{thm:estimation_ub_adaptive}]
By \cref{trunc,deviation}, we have the bound
\begin{equation*}
\E \|f - \tilde f\|_{\cB^{-1}_{p, q}} \lesssim \sum_{j \geq 0} 2^{-j} 2^{dj(\frac 12 - \frac 1p)} \left\{(\tau_j \wedge \|\beta_j\|_{\ell_p}) + 2^{-dj/2} \tau_j\right\}\,.
\end{equation*}
First, the definition of $\tau_j$ implies that
\begin{equation*}
2^{-j} 2^{dj(\frac 12 - \frac 1p)} \cdot 2^{-dj/2} \tau_j \lesssim 2^{-j} \frac{dj}{\sqrt n}\,,
\end{equation*}
so $\sum_{j \geq 0} 2^{-j} 2^{dj(\frac 12 - \frac 1p)} \tau_j p_j^{1/2} \lesssim n^{-1/2}$.
Since the claimed rate is no faster than $n^{-1/2}$, this term is not the dominant one.

If $f \in \bes{s}{p'}{q}(L; m)$, then $2^{dj(\frac 12 - \frac 1p)} \|\beta_j\|_{\ell_p} \lesssim 2^{-js}$, which implies
\begin{equation*}
2^{-j} 2^{dj(\frac 12 - \frac 1p)} (\tau_j \wedge \|\beta_j\|_{\ell_p}) \lesssim 2^{-j} j \left(\frac{2^{dj}}{n}\right)^{1/2} \wedge 2^{-j(1+s)}\,,
\end{equation*}
Summing this expression yields
\begin{equation*}
\sum_{j \geq 0} 2^{-j} 2^{dj(\frac 12 - \frac 1p)} (\tau_j \wedge \|\beta_j\|_{\ell_p}) \lesssim \left\{
\begin{array}{ll}
n^{-1/2} & \text{ if $d = 1$} \\
n^{-1/2} (\log n)^2 & \text{ if $d = 2$} \\
n^{-(1+s)/(d+2s)} \log n & \text{ if $d \geq 3$.}
\end{array}
\right.
\end{equation*}
\end{proof}

\section{An upper bound for densities not bounded below}\label{unbounded_ub_proof}
We require two basic facts about Binomial random variables; the first is a direct consequence of the inequalities of \citet{Ros72} and the second is a standard Chernoff bound~\citep{HagRub90}.
\begin{lemma}\label{binomial_moments}
Let $X \sim \mathrm{Bin}(n, p)$. If $n p \geq 1$, then
\begin{equation*}
\E \left| \frac{X}{n} - p\right|^k \lesssim (p/n)^{k/2}\,. 
\end{equation*}\textbf{}
\end{lemma}

\begin{lemma}\label{binomial_deviations}
If $X \sim \mathrm{Bin}(n, p)$, then
\begin{equation*}
\p[ X \leq \frac 12 np] \leq e^{-\frac{np}{8}}\,.
\end{equation*}
\end{lemma}

Throughout the proof, we let $\Psi_j$ for $j \geq 0$ denote the Haar wavelet basis.
Fix a threshold $J$, which we will assume satisfies $n \asymp 2^{J(d+s)}$.
We set
\begin{equation*}
\hat \beta_\psi \defeq \frac 1n \sum_{i=1}^n \psi(X_i) \quad \psi \in \Psi_j, 0 \leq j \leq J
\end{equation*}
and define
\begin{equation*}
\hat f \defeq 1 + \sum_{0 \leq j < J}\sum_{\psi \in \Psi_j} \hat \beta_\psi \psi\,
\end{equation*}

For convenience, throughout the proof we write $\mu$ for the probability measure with density $f$ and $\nu$ for the probability measure with density $\hat f$.
Note that for any $Q \in \cQ_j$, $j \leq J$, we have $n \cdot \nu(Q)\sim\mathrm{Bin}(n, \mu(Q))$.

Let us now define a density $g$ by
\begin{equation*}
g(x) = \frac{\nu(Q)}{\mu(Q)} f(x) \quad \forall x \in Q, Q \in \cQ_J\,,
\end{equation*}
where we interpret $0 / 0  = 0$.
Since $\mu(Q) = 0 \implies \nu(Q) = 0$ almost surely, this density is well defined with probability $1$.
By construction, $g/f$ is constant on each element of $\cQ_J$, so $g/f  \in V_J$.

Let $0 \leq j < J$, and fix $Q \in \cQ_j$.
\Cref{l1_lp_bound} implies that the contribution to $W_p(f, g)$ from $Q \in \cQ_j$ can be bounded by either the $L_1$ bound $\|K_{j+1}(f - g)\|_{L_1(Q)}$ or the $L_p$ bound $m_Q^{1-p} \Delta_Q^p \|K_{j+1}(f - g)\|^p_{L_p(Q)}$.
We can always apply the first bound, but we will apply the second when the density $f$ is bounded below on $Q$.
We denote by $\cE_Q$ the high-probability event that $\nu(Q) \geq \frac 12 \mu(Q)$.

We divide into cases.
As we shall see, the dominant term arises from the first case; the latter two are asymptotically negligible.
\paragraph{Case 1: $\inf_{x \in Q} f(x) < 2^{-js}$}
Under the assumption that $f \in \cC^s$, we have that $\sup_{x \in Q} \leq \inf_{x \in Q} + \diam(Q)^s \lesssim 2^{-js}$.
Therefore $\mu(Q) = \int_Q f(x) \dd x \lesssim 2^{-j(d+s)}$.

To evaluate $K_{j+1}(f - g)$, we note that for each $R \subseteq Q, R \in \cQ_{j+1}$, the densities $K_{j+1} f$ and $K_{j+1} g $ are constant on R, with value $2^{d(j+1)} \mu(R)$ and $2^{d(j+1)} \nu(R)$.
This implies
\begin{align*}
\|K_{j+1}(f - g)\|_{L_1(Q)} & = \sum_{R \subseteq Q, R \in \cQ_{j+1}} 2^{d(j+1)} \int_R |\mu(R) - \nu(R)| \\
& = \sum_{R \subseteq Q, R \in \cQ_{j+1}} |\mu(R) - \nu(R)|\,,
\end{align*}
In particular, recalling that $n \cdot \nu(R) \sim \mathrm{Bin}(n, \mu(R))$, we have that that
\begin{align}\label{l1_variance_bound}
\E \|K_{j+1}(f - g)\|_{L_1(Q)}^2 \lesssim \sum_{\substack{R\subseteq Q \\ R \in \cQ_{j+1}}} \E (\mu(R) - \nu(R))^2 \leq \mu(Q)/n\,.
\end{align}
Jensen's inequality along with the fact that $\sup_{x \in Q} f(x) \leq 2^{-js}$ therefore yields
\begin{equation*}
\E \|K_{j+1}(f - g)\|_{L_1(Q)} \leq (\mu(Q)/n)^{1/2} \lesssim 2^{-j(d+s)/2} n^{-1/2}\,.
\end{equation*}

\paragraph{Case 2: $\inf_{x \in Q} f(x) \geq 2^{-js}$ and $\cE_Q$ holds}
Under the assumption that $f \in \cC^s$, we have that $\sup_{x \in Q} f(x) - \inf_{x \in Q} f(x) \leq \diam(Q)^s \lesssim 2^{-js}$.
We therefore have $\Delta_Q \lesssim 1$.
Moreover, on $\cE_Q$, we have
\begin{equation*}
m_Q \gtrsim \inf_{x \in Q} f(x)\,.
\end{equation*}

Arguing as above, we have
\begin{align*}
\|K_{j+1}(f - g)\|_{L_p(Q)}^p & = \sum_{R \subseteq Q, R \in \cQ_{j+1}} 2^{dp(j+1)} \int_R |\mu(R) - \nu(R)|^p \dd x \\
& \lesssim 2^{dj(p-1)} \sum_{R \subseteq Q, R \in \cQ_{j+1}} |\mu(R) - \nu(R)|^p\,.
\end{align*}

Since $\sup_{x \in Q} f(x) \asymp \inf_{x \in Q} f(x)$, we have $\mu(R) \asymp 2^{-jd} \inf_{x \in Q} f(x)$.
Since $n \gtrsim 2^{J(d+s)} \geq 2^{j(d+s)} \geq 1/\mu(R)$, we can apply \cref{binomial_moments} to obtain
\begin{equation*}
\E |\mu(R) - \nu(R)|^p \lesssim (\mu(R)/n)^{p/2} \lesssim (2^{-jd}\inf_{x \in Q} f(x)/n)^{p/2}\,.
\end{equation*}

Combining these bounds, we obtain
\begin{align*}
\E[m_Q^{1-p} \Delta_Q^p \|K_{j+1}(f - g)\|^p_{L_p(Q)} \indic{\cE_Q}] & \lesssim 2^{jd(\frac p2 - 1)} (\inf_{x \in Q} f(x))^{1-\frac p2} n^{-p/2} \\
& \leq 2^{j(d+s)(\frac p2 - 1)} n^{-p/2}\,.
\end{align*}

\paragraph{Case 3: $\cE_Q$ does not hold}
The Cauchy-Schwarz inequality and the bound~\eqref{l1_variance_bound} imply
\begin{align*}
\E[\|K_{j+1}(f - g)\|_{L_1(Q)} \indic{\cE_Q^C}] & \leq (\E \|K_{j+1}(f - g)\|^{2}_{L_1(Q)})^{1/2} \p[\cE_Q^C]^{1/2} \\
& \lesssim (\mu(Q)/n)^{1/2} \p[\cE_Q^C]^{1/2}\,.
\end{align*}
By \cref{binomial_deviations}, 
\begin{align*}
\p[\cE_Q^C] & \leq e^{- n\mu(Q)/8} \lesssim \frac{1}{n \mu(Q)}
\end{align*}
and combining these bounds yields
\begin{equation*}
\E[\|K_{j+1}(f - g)\|_{L_1(Q)} \indic{\cE_Q^C}] \lesssim n^{-1}\,.
\end{equation*}

\medskip
We are now prepared to bound $W_p(f, g)$.
Combining the three cases above, we observe
\begin{align*}
\E \left\{\|K_{j+1}(f - g)\|_{L_1(Q)} \wedge m_{Q}^{1-p} \Delta_Q^p \|K_{j+1}(f - g)\|_{L_p(Q)}^p\right\} & \lesssim 2^{-j(d+s)/2} n^{-1/2} \\
& + 2^{j(d+s)(\frac p2 - 1)} n^{-p/2} \\
&+ n^{-1}\,.
\end{align*}

Under the assumption that $n \geq 2^{j(d+s)}$, the first term dominates.
We therefore have
\begin{equation*}
\E \sum_{Q \in \cQ^j} \|K_{j+1}(f - g)\|_{L_1(Q)} \wedge m_{Q}^{1-p} \Delta_Q^p \|K_{j+1}(f - g)\|_{L_p(Q)}^p \lesssim 2^{j(d-s)/2} n^{-1/2}\,,
\end{equation*}
and \cref{l1_lp_bound} combined with Jensen's inequality implies
\begin{equation*}
\E W_p(f, g) \lesssim \sum_{0 \leq j < J} 2^{-j} 2^{j(d-s)/2p} n^{-1/2p} \lesssim  \left\{\begin{array}{ll}
n^{- \frac{1 + s/p}{d+s}} & \text{if $d - s > 2p$} \\
n^{-1/2p} \log n & \text{if $d - s = 2p$} \\
n^{-1/2p} & \text{if $d - s < 2p$.}
\end{array}\right.
\end{equation*}

On the other hand, a simple truncation argument (\cref{unbounded_truncation}, below) shows that $\E W_p(g, \hat f) \lesssim 2^{-J(1 + s/p)} \asymp n^{-\frac{1+s/p}{d + s}}$.

Since $W_p(f, \hat f) \leq W_p(f, g) + W_p(g, {\hat f})$, combining these two bounds yields the claim.
\qed

All that remains is to give the proof of \cref{unbounded_truncation}.
\begin{lemma}\label{unbounded_truncation}
For $g$ and $\hat f$ defined as in the proof of \cref{thm:unbounded_ub}, we have
\begin{equation*}
\E W_p(g, {\hat f}) \lesssim 2^{-J(1 + s/p)}\,.
\end{equation*}
\end{lemma}
\begin{proof}
By construction, $\hat f$ is an element of $V_J$.
Moreover, $g$ is defined so that $K_J g = K_J \hat f$.
Combining these two facts yields that $\hat f = K_J g$.
For $j \leq J$, this implies that $K_j \hat f = K_j g$, and for $j > J$, it yields
\begin{equation*}
\|K_{j}(\hat f - g)\|_{L_1(Q)} = \|K_J g - K_j g\|_{L_1(Q)} = \frac{\nu(Q)}{\mu(Q)} \|K_J f - K_j f\|_{L_1(Q)}\,,
\end{equation*}
where in the last step we use the definition of $g$.
Since $\E \nu(Q) = \mu(Q)$, we obtain
\begin{equation*}
\E \|K_{j}(\hat f - g)\|_{L_1(\Omega)} = \left\{\begin{array}{ll}
0 & \text{if $j \leq J$} \\
\|(K_j - K_J) f\|_{L_1(\Omega)} & \text{ if $j > J$.}
\end{array}\right.
\end{equation*}

For $j > J$, the operator $K_j - K_J$ is a projection onto the span of $\{\Psi_k\}_{J \leq k < j}$.
Under the assumption that $f \in \cC^s = \cB_{\infty, \infty}^s$, we obtain
\begin{align*}
\|(K_j - K_J) f\|_{L_1(\Omega)} & = \left\|\sum_{J \leq k < j} \sum_{\psi \in \Psi_k} \beta_\psi \psi \right\|_{L_1(\Omega)} \\
& \leq \sum_{J \leq k < j} \left\|\sum_{\psi \in \Psi_k} \beta_\psi \psi \right\|_{L_1(\Omega)} \\
& \leq \sum_{J \leq k < j} 2^{-dk/2} \|\beta\|_{\ell_1} \lesssim 2^{-Js}\,,
\end{align*}
where the final inequality uses \cref{assume:stability}.

Therefore, applying the bound of~\citet{WeeBac18} in the equivalent form~\eqref{eq:projection_expansion}, we obtain
\begin{align*}
\E W_p(\hat f, g) &\lesssim \sum_{j \geq 0} 2^{-j} \E \|K_j(\hat f - g)\|_{L_1(\Omega)}^{1/p} \\
& \leq \sum_{j \geq 0} 2^{-j} (\E \|K_j(\hat f - g)\|_{L_1(\Omega)})^{1/p} \\
& = \sum_{j > J} 2^{-j} \|(K_j - K_J) f\|_{L_1(\Omega)}^{1/p} \lesssim 2^{-J(1 + s/p)}\,,
\end{align*}
as desired.
\end{proof}

\section{Proof of Theorem 10}\label{app:refined} 
The assumption that $g/f \in V_J$ implies that, for any $j \leq J$, if $\int_Q f = 0$, then $\int_Q g = 0$.
For $0 \leq j \leq J$, we therefore define $f_j$ by setting
\begin{equation*}
f_j(x) = \frac{\int_Q g}{\int_Q f} f(x) \quad \forall x \in Q, Q \in \cQ_j\,,
\end{equation*}
where we interpret $0/0 = 0$.
With this definition, $f_0 = f$ and $f_J = g$.
Moreover, for $0 \leq j \leq J$, we have $K_j f_j = K_j f_{j+1}$.

The triangle inequality therefore implies
\begin{equation}\label{eq:triangle_wp}
W_p(f, g) \leq \sum_{0 \leq j < J-1} W_p({f_j}, {f_{j+1}})\,.
\end{equation}
We focus on bounding the quantity $W_p({f_j}, {f_{j+1}})$, from which the claimed bound will follow.

We require the following lemma, whose proof is deferred.
\begin{lemma}\label{prop:best_of_both}
Fix $j \geq 0$.
Let $p \geq 1$, and let $f_j$ and $f_{j+1}$ be any two densities such that $K_j f_j = K_j f_{j+1}$.
Then
\begin{equation*}
W_p({f_j}, {f_{j+1}}) \lesssim 2^{-j} \left(\sum_{Q \in \cQ_j} \|f_j - f_{j+1}\|_{L_1(Q)} \wedge (\inf_{x \in Q} f_j(x))^{1-p} \|f_j-f_{j+1}\|^p_{L_p(Q)} \right)^{1/p}\,.
\end{equation*}
\end{lemma}

By the definition of $f_j$, we have $\inf_{x \in Q} f_j(x) = \frac{\int_Q g}{\int_Q f} \inf_{x \in Q} f(x) = m_Q$.
The claim then follows from the following lemma.
\begin{lemma}\label{l1_lp_projection}
For any $Q \in \cQ_j$, the densities $f_j$ and $f_{j+1}$ satisfy
\begin{align*}
\|f_{j} - f_{j+1}\|_{L_1(Q)} & \lesssim \|K_{j+1}(f - g)\|_{L_1(Q)} \\
\|f_{j} - f_{j+1}\|_{L_p(Q)} & \lesssim \Delta_Q \|K_{j+1}(f - g)\|_{L_p(Q)} \quad \forall p \geq 1
\end{align*}
\end{lemma}
Combining~\eqref{eq:triangle_wp}, \cref{prop:best_of_both}, and~\cref{l1_lp_projection} yields \cref{l1_lp_bound}. \qed

It remains to give proofs of the two required lemmas.
\subsection{Proof of \cref{prop:best_of_both}}
If $K_j f_j = K_j f_{j+1}$, then the measures ${f_j}$ and ${f_{j+1}}$ assign the same mass to each cube in $\cQ_j$.
We can therefore obtain a coupling between ${f_j}$ and ${f_{j+1}}$ by optimally coupling ${f_j}|_Q$ and ${f_{j+1}}|_Q$ on each $Q \in \cQ_j$ and combining the resulting couplings~\citep[see][Lemma A.2]{WeeBac18}.
Doing so yields the bound
\begin{equation*}
W_p^p({f_j}, {f_{j+1}}) \leq \sum_{Q \in \cQ_j} W_p^p({f_j}|_Q, {f_{j+1}}|_Q)\,.
\end{equation*}

To show that $W_p^p({f_j}|_Q, {f_{j+1}}|_Q) \leq 2^{-kp} \|f_j - f_{j+1}\|_{L_1(Q)}$, we use the fact that for two measures with support in a set $K$, it holds $W_p^p(\mu, \nu) \leq \diam(K)^p d_{\mathrm{TV}}(\mu, \nu)$, and when $\mu$ and $\nu$ have densities, the total variation $d_{\mathrm{TV}}$ reduces to the $L_1$ norm.

To obtain the second bound, we apply \cref{prop:wavelet_ub}.
By \cref{assume:stability}, if we denote by $P_j$ the $L_2$ projection onto the span of $\Psi_j$, then the bound of \cref{prop:wavelet_ub} reads
\begin{align*}
W_p({f_j}|_Q, {f_{j+1}}|_Q) & \lesssim (\inf_{x \in Q} f_j(x))^{-1/p'} \sum_{k \geq j} 2^{-k} \|P_j(f_j - f_{j+1})\|_{L_p(Q)} \\
& \lesssim 2^{-j} (\inf_{x \in Q} f_j(x))^{-1/p'} \|f_j - f_{j+1}\|_{L_p(Q)}\,,
\end{align*}
where the second inequality follows from \cref{assume:projection_stability}.
\qed

\subsection{Proof of \cref{l1_lp_projection}}
For convenience, let us write $\mu$ for the measure with density $f$ and $\nu$ for the measure with density $g$.
We recall that on $R \in \cQ_{j+1}$, the densities $K_{j+1} f$ and $K_{j+1} g$ are constant with values $2^{d(j+1)} \mu(R)$ and $2^{d(j+1)} \nu(R)$.
For any cube $R \in \cQ_{j+1}$ such that $R \subseteq Q$, we have that $(f_j - f_{j+1}) = \left(\frac{\nu(Q)}{\mu(Q)} - \frac{\nu(R)}{\mu(R)}\right) f$ on $R$, which implies
\begin{align*}
\|f_j - f_{j+1}\|_{L_1(Q)} & = \sum_{\substack{R \in \cQ_{j+1} \\ R \subseteq Q}} \|f\|_{L_1(R)} \left| \frac{\nu(Q)}{\mu(Q)} - \frac{\nu(R)}{\mu(R)}\right|\\
\|f_j - f_{j+1}\|_{L_p(Q)}^p & = \sum_{\substack{R \in \cQ_{j+1} \\ R \subseteq Q}} \|f\|_{L_p(R)}^p \left| \frac{\nu(Q)}{\mu(Q)} - \frac{\nu(R)}{\mu(R)}\right|^p\,.
\end{align*}
The first sum is bounded by $|\nu(Q) - \mu(Q)| + \sum_R |\nu(R) - \mu(R)| \leq 2 \sum_{R} |\nu(R) - \mu(R)| = 2 \|K_{j+1} (f - g)\|_{L_1(Q)}$.

For the second, we write
\begin{equation*}
\left| \frac{\nu(Q)}{\mu(Q)} - \frac{\nu(R)}{\mu(R)}\right|^p \lesssim \frac{1}{\mu(R)^p} |\nu(R) - \mu(R)|^p + \frac{1}{\mu(Q)^p} |\nu(Q) - \mu(Q)|^p\,.
\end{equation*}
Since $R$ is a cube of side length $2^{-(j+1)}$, we have the simple bounds
\begin{align*}
\mu(Q)^p \geq \mu(R)^p & \geq 2^{-(j+1)pd} (\inf_{x \in Q} f(x))^p \\
\|f\|_{L_p(R)}^p & \leq 2^{-(j+1)d} (\sup_{x \in Q} f(x))^p\,.
\end{align*}
We obtain
\begin{equation*}
\|f\|_{L_p(R)}^p \left| \frac{\nu(Q)}{\mu(Q)} - \frac{\nu(R)}{\mu(R)}\right|^p \lesssim \Delta_Q^p 2^{(j+1)(p-1)d}(|\nu(R) - \mu(R)|^p + |\nu(Q) - \mu(Q)|^p)\,,
\end{equation*}
and applying the definition of $K_{j+1} f$ and $K_{j+1} g$ yields
\begin{equation*}
\|f_j - f_{j+1}\|_{L_p(Q)}^p = \sum_{\substack{R \in \cQ_{j+1} \\ R \subseteq Q}} \|f\|_{L_p(R)}^p \left| \frac{\nu(Q)}{\mu(Q)} - \frac{\nu(R)}{\mu(R)}\right|^p \lesssim \Delta_Q^p \|K_{j+1}(f - g)\|_{L_p(Q)}^p\,,
\end{equation*}
as desired.
\qed

\section{Assumptions on wavelets}\label{app:wave}
For completeness, we have extracted the properties we require of the sets $\Phi$ and $\Psi_j$.
\begin{assume}[Basis]\label{assume:basis}
$\Phi \cup \left\{\bigcup_{j \geq 0} \Psi_{j}\right\}$ forms an orthonormal basis for $L_2([0, 1]^d)$. 
\end{assume}
\begin{assume}[Regularity]\label{assume:regularity}
The functions in $\Phi$ and $\Psi_j$ for $j \geq 0$ all lie in $\cC^r(\Omega)$, and polynomials of degree at most $r$ on $\Omega$ lie in $\mathrm{span}(\Phi)$.
\end{assume}
\begin{assume}[Tensor construction]\label{assume:tensor}
Each $\psi \in \Psi_j$ can be expressed as $\psi(\mathbf{x}) = \prod_{i=1}^d \psi_i(x_i)$ for some univariate functions $\psi_i$.
\end{assume}
\begin{assume}[Locality]\label{assume:locality}
For each $\psi \in \Psi_j$ there exists a rectangle $I_\psi \subseteq [0, 1]^d$ such that $\supp(\psi) \subseteq I_\psi$, $\diam(I_\psi) \lesssim 2^{-j}$, and $\left\|\sum_{\psi \in \Psi_j} \1\{x \in I_\psi\}\right\|_{L_\infty} \lesssim 1$.
\end{assume}
\begin{assume}[Norm]\label{assume:norm}
$\|\psi\|_{L_p(\Omega)} \asymp 2^{dj(\frac 12 - \frac 1p )}$ for all $\psi \in \Psi_j$.
\end{assume}
\begin{assume}[Bernstein estimate]\label{assume:bernstein}
$\|\nabla f\|_{L_p(\Omega)} \lesssim 2^{j} \|f\|_{L_p(\Omega)}$ for any $f \in \mathrm{span}\left(\Phi \cup \left\{\bigcup_{0 \leq k < j} \Psi_k\right\}\right)$.
\end{assume}

These requirements are achievable for $\Omega = [0, 1]^d$ due to a classic construction due to~\citet{CohDauVia93}. \citep[See also][Chapter~2.12, for further details.]{Coh03}

We state without proof some straightforward consequences of our assumptions.
\begin{lemma}\label{assume:size}
$|\Phi| \lesssim 1$ and $|\Psi_j| \lesssim 2^{dj}$ for $j \geq 0$.
\end{lemma}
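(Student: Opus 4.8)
The plan is to prove the bound on $|\Psi_j|$ by a volume-packing argument that plays the $L_1$ and $L_\infty$ normalizations of Assumption~\ref{assume:norm} against each other, and then to dispose of $|\Phi|$ separately. First I would record two consequences of Assumption~\ref{assume:norm}: taking $p=1$ gives $\|\psi\|_{L_1(\Omega)} \asymp 2^{-dj/2}$ and taking $p=\infty$ gives $\|\psi\|_{L_\infty(\Omega)} \asymp 2^{dj/2}$ for every $\psi \in \Psi_j$. By Assumption~\ref{assume:locality} we have $\supp(\psi) \subseteq I_\psi$ with $\diam(I_\psi) \lesssim 2^{-j}$, so combining these estimates,
\[
2^{-dj/2} \lesssim \|\psi\|_{L_1(\Omega)} = \int_{I_\psi} |\psi(x)| \dd x \leq |I_\psi|\,\|\psi\|_{L_\infty(\Omega)} \lesssim |I_\psi|\,2^{dj/2},
\]
which yields $|I_\psi| \gtrsim 2^{-dj}$. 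In other words, each localizing rectangle has volume bounded \emph{below} by a constant multiple of $2^{-dj}$ (the diameter bound only gives an upper bound, which is why the norm estimates are needed here).

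Next I would integrate the bounded-overlap condition of Assumption~\ref{assume:locality} over the cube:
\[
\sum_{\psi \in \Psi_j} |I_\psi| = \int_{[0,1]^d} \sum_{\psi \in \Psi_j} \1\{x \in I_\psi\} \dd x \leq \Big\|\sum_{\psi \in \Psi_j} \1\{x \in I_\psi\}\Big\|_{L_\infty} \lesssim 1.
\]
Since every summand on the left is $\gtrsim 2^{-dj}$, this forces $|\Psi_j|\,2^{-dj} \lesssim 1$, i.e.\ $|\Psi_j| \lesssim 2^{dj}$, which is the claimed bound.

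For $|\Phi| \lesssim 1$, the cleanest route is to note that, since constants are allowed to depend on the dimension and the choice of wavelet system, and $\Phi$ is the fixed finite set of scaling functions of the underlying multiresolution analysis on $[0,1]^d$ (the Cohen--Daubechies--Vial construction referenced in Appendix~\ref{app:wave}), its cardinality depends only on $d$ and the wavelet and is therefore $\lesssim 1$ by convention; alternatively one can rerun the packing argument above at scale $j=0$, using that the elements of $\Phi$ satisfy the analogous level-$0$ normalization and locality estimates. The only non-routine point in the whole argument is obtaining the volume lower bound $|I_\psi| \gtrsim 2^{-dj}$: the listed assumptions do not provide it directly, and pitting $\|\psi\|_{L_1}$ against $\|\psi\|_{L_\infty}$ on the support $I_\psi$ is, as far as I can see, the way to extract it.
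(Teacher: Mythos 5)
The paper states this lemma without proof (``We state without proof some straightforward consequences of our assumptions''), so there is no official argument in the text to compare against; your proposal supplies a valid one. You correctly identify the one genuinely non-obvious point: Assumption~\ref{assume:locality} only bounds $\diam(I_\psi)$ from \emph{above}, giving $|I_\psi| \lesssim 2^{-dj}$, which is the wrong direction for a packing bound, and pitting the $p=1$ and $p=\infty$ cases of Assumption~\ref{assume:norm} against each other via $\|\psi\|_{L_1} \leq |I_\psi|\,\|\psi\|_{L_\infty}$ is exactly what produces the needed lower bound $|I_\psi| \gtrsim 2^{-dj}$. Integrating the bounded-overlap condition over $[0,1]^d$ to get $\sum_{\psi \in \Psi_j} |I_\psi| \lesssim 1$ and dividing by the volume lower bound is sound, and treating $|\Phi| \lesssim 1$ as a wavelet-system constant is consistent with the paper's stated convention that implicit constants may depend on the choice of wavelet system and dimension. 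No gaps.
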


\begin{lemma}\label{assume:stability}
For any vector $\{\alpha_\phi\}_{\phi \in \Phi}$, $\|\sum_{\phi \in \Phi} \alpha_\phi \phi\|_{L_p(\Omega)} \asymp \|\alpha\|_{\ell_p}$.
Likewise, for any $\{\beta_\psi\}_{\psi \in \Psi_j}$, $\|\sum_{\psi \in \Psi_j} \beta_\psi \psi\|_{L_p(\Omega)} \asymp 2^{dj(\frac 12 - \frac 1p )} \|\beta\|_{\ell_p}$.
\end{lemma}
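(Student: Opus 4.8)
The plan is to prove a two-sided $L_p$-stability estimate for the wavelet system at a single fixed scale, which is exactly what the two halves of the lemma assert. I will carry out the argument in detail for the $\Psi_j$-statement; the $\Phi$-statement then follows by the same reasoning, and is in fact simpler, since $|\Phi| \lesssim 1$ by Lemma~\ref{assume:size} and the scaling functions are bounded (being $\cC^r$ on the compact cube), so all scale-dependent quantities are replaced by constants.

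Fix $j \geq 0$, write $f = \sum_{\psi \in \Psi_j} \beta_\psi \psi$, and let $p'$ be the conjugate exponent, $\tfrac1p + \tfrac1{p'} = 1$. For the \emph{upper} bound, Assumption~\ref{assume:locality} guarantees that every $x$ lies in at most $O(1)$ of the sets $I_\psi \supseteq \supp(\psi)$; applying Hölder's (Jensen's) inequality to this $O(1)$-term sum gives the pointwise bound $|f(x)|^p \lesssim \sum_{\psi \in \Psi_j} |\beta_\psi|^p\,|\psi(x)|^p$. Integrating over $\Omega$ and invoking the norm estimate $\|\psi\|_{L_p}^p \asymp 2^{pdj(\frac12-\frac1p)}$ from Assumption~\ref{assume:norm} yields $\|f\|_{L_p}^p \lesssim 2^{pdj(\frac12-\frac1p)}\|\beta\|_{\ell_p}^p$, which is one direction.

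For the \emph{lower} bound, orthonormality (Assumption~\ref{assume:basis}) gives $\beta_\psi = \int_{I_\psi} f\psi\,\dd x$, so Hölder's inequality yields $|\beta_\psi| \leq \|f\|_{L_p(I_\psi)}\|\psi\|_{L_{p'}}$. Since Assumption~\ref{assume:norm} holds for every exponent, $\|\psi\|_{L_{p'}} \asymp 2^{dj(\frac12-\frac1{p'})} = 2^{-dj(\frac12-\frac1p)}$; raising to the $p$-th power, summing over $\psi \in \Psi_j$, and using the bounded-overlap property of Assumption~\ref{assume:locality} once more, $\sum_{\psi\in\Psi_j}\|f\|_{L_p(I_\psi)}^p = \int_\Omega \big(\sum_{\psi\in\Psi_j}\1\{x\in I_\psi\}\big)|f(x)|^p\,\dd x \lesssim \|f\|_{L_p}^p$, so that $2^{dj(\frac12-\frac1p)}\|\beta\|_{\ell_p} \lesssim \|f\|_{L_p}$. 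Combining the two estimates gives the claim for $\Psi_j$. For $\Phi$ one repeats the argument with $I_\psi$ replaced by $\Omega$, using $\|\phi\|_{L_p},\|\phi\|_{L_{p'}} \asymp 1$ and $|\Phi|\lesssim 1$ in place of the scale-dependent quantities; the resulting passage between $\ell_1$ and $\ell_p$ norms on $O(1)$ indices is harmless.

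No step is a genuine obstacle here; the only points that require a moment's care are (i) that the norm estimate of Assumption~\ref{assume:norm} is applied with both $p$ and its conjugate $p'$ — this is not circular, since that assumption is a hypothesis on the wavelet system valid for every exponent — and (ii) the endpoint $p=1$ (so $p'=\infty$), where the lower bound uses $\|\psi\|_{L_\infty}\asymp 2^{dj/2}$ and Hölder with the pair $(1,\infty)$; the argument above goes through verbatim in that case.
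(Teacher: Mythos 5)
The paper states this lemma ``without proof'' as a routine consequence of the assumptions, so there is no paper argument to compare against; what you have written is the standard single-level $L_p$-stability argument for a compactly supported, orthonormal, bounded-overlap wavelet system, and it is correct. The upper bound via Jensen on the $O(1)$-term pointwise sum plus Assumption~\ref{assume:norm}, and the lower bound via $\beta_\psi=\int_{I_\psi}f\psi$, H\"older duality with $\|\psi\|_{L_{p'}}\asymp 2^{-dj(1/2-1/p)}$, and bounded overlap, both go through exactly as you say, including at $p=1$.

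One small point worth making explicit in the $\Phi$ case: you invoke $\|\phi\|_{L_p},\|\phi\|_{L_{p'}}\asymp 1$, but only justify the upper bound (boundedness of the finitely many $\cC^r$ scaling functions on the compact cube). The matching lower bound follows in one line from orthonormality:
\begin{equation*}
1=\|\phi\|_{L_2}^2=\int_\Omega|\phi|\cdot|\phi|\,\dd x\le\|\phi\|_{L_p}\|\phi\|_{L_{p'}}\lesssim\|\phi\|_{L_p}\,,
\end{equation*}
so indeed $\|\phi\|_{L_p}\gtrsim 1$. With that noted, the proof is complete.
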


\begin{lemma}\label{assume:projection_stability}
If $P_j$ denotes the orthogonal projection onto the span of $\Psi_j$, then $\|P_j f\|_{L_p(\Omega)} \lesssim \|f\|_{L_p(\Omega)}$.
\end{lemma} 

\bibliographystyle{abbrvnat_weed}

\begin{thebibliography}{49}
\providecommand{\natexlab}[1]{#1}
\providecommand{\url}[1]{\texttt{#1}}
\expandafter\ifx\csname urlstyle\endcsname\relax
  \providecommand{\doi}[1]{doi: #1}\else
  \providecommand{\doi}{doi: \begingroup \urlstyle{rm}\Url}\fi

\bibitem[Agueh and Carlier(2011)]{AguCar11}
Agueh, M. and Carlier, G.
\newblock Barycenters in the wasserstein space.
\newblock \emph{SIAM Journal on Mathematical Analysis}, 43\penalty0
  (2):\penalty0 904--924, 2011.

\bibitem[Altschuler et~al.(2017)Altschuler, Weed, and Rigollet]{AltWeeRig17}
Altschuler, J., Weed, J., and Rigollet, P.
\newblock Near-linear time approximation algorithms for optimal transport via
  {S}inkhorn iteration.
\newblock In \emph{Advances in Neural Information Processing Systems 30: Annual
  Conference on Neural Information Processing Systems 2017, 4-9 December 2017,
  Long Beach, CA, {USA}}, pages 1961--1971, 2017.

\bibitem[Bassetti et~al.(2006)Bassetti, Bodini, and Regazzini]{BasBodReg06}
Bassetti, F., Bodini, A., and Regazzini, E.
\newblock On minimum {K}antorovich distance estimators.
\newblock \emph{Statist. Probab. Lett.}, 76\penalty0 (12):\penalty0 1298--1302,
  2006.

\bibitem[Benamou and Brenier(2000)]{BenBre00}
Benamou, J.-D. and Brenier, Y.
\newblock A computational fluid mechanics solution to the {M}onge-{K}antorovich
  mass transfer problem.
\newblock \emph{Numer. Math.}, 84\penalty0 (3):\penalty0 375--393, 2000.

\bibitem[Boissard and Le~Gouic(2014)]{BoiLe-14}
Boissard, E. and Le~Gouic, T.
\newblock On the mean speed of convergence of empirical and occupation measures
  in {W}asserstein distance.
\newblock \emph{Ann. Inst. Henri Poincar\'e Probab. Stat.}, 50\penalty0
  (2):\penalty0 539--563, 2014.

\bibitem[Boucheron et~al.(2013)Boucheron, Lugosi, and Massart]{BouLugMas13}
Boucheron, S., Lugosi, G., and Massart, P.
\newblock \emph{Concentration inequalities}.
\newblock Oxford University Press, Oxford, 2013.
\newblock A nonasymptotic theory of independence, With a foreword by Michel
  Ledoux.

\bibitem[Brenier(2003)]{Bre03}
Brenier, Y.
\newblock Extended {M}onge-{K}antorovich theory.
\newblock In \emph{Optimal transportation and applications ({M}artina {F}ranca,
  2001)}, volume 1813 of \emph{Lecture Notes in Math.}, pages 91--121.
  Springer, Berlin, 2003.

\bibitem[Chen et~al.(2012)Chen, Iwen, Chin, and Maggioni]{CheIweChi12}
Chen, G., Iwen, M.~A., Chin, S., and Maggioni, M.
\newblock A fast multiscale framework for data in high-dimensions: Measure
  estimation, anomaly detection, and compressive measurements.
\newblock In \emph{2012 Visual Communications and Image Processing, {VCIP}
  2012, San Diego, CA, USA, November 27-30, 2012}, pages 1--6, 2012.

\bibitem[Cohen(2003)]{Coh03}
Cohen, A.
\newblock \emph{Numerical analysis of wavelet methods}, volume~32 of
  \emph{Studies in Mathematics and its Applications}.
\newblock North-Holland Publishing Co., Amsterdam, 2003.

\bibitem[Cohen et~al.(1993)Cohen, Daubechies, and Vial]{CohDauVia93}
Cohen, A., Daubechies, I., and Vial, P.
\newblock Wavelets on the interval and fast wavelet transforms.
\newblock \emph{Appl. Comput. Harmon. Anal.}, 1\penalty0 (1):\penalty0 54--81,
  1993.

\bibitem[Cuturi(2013)]{Cut13}
Cuturi, M.
\newblock Sinkhorn distances: Lightspeed computation of optimal transport.
\newblock In \emph{Advances in Neural Information Processing Systems 26: 27th
  Annual Conference on Neural Information Processing Systems 2013. Proceedings
  of a meeting held December 5-8, 2013, Lake Tahoe, Nevada, United States.},
  pages 2292--2300, 2013.

\bibitem[Dominitz et~al.(2008)Dominitz, Angenent, and Tannenbaum]{DomAngTan08}
Dominitz, A., Angenent, S., and Tannenbaum, A.
\newblock On the computation of optimal transport maps using gradient flows and
  multiresolution analysis.
\newblock In \emph{Recent advances in learning and control}, volume 371 of
  \emph{Lect. Notes Control Inf. Sci.}, pages 65--78. Springer, London, 2008.

\bibitem[Donoho et~al.(1996)Donoho, Johnstone, Kerkyacharian, and
  Picard]{DonJohKer96}
Donoho, D.~L., Johnstone, I.~M., Kerkyacharian, G., and Picard, D.
\newblock Density estimation by wavelet thresholding.
\newblock \emph{Ann. Statist.}, 24\penalty0 (2):\penalty0 508--539, 1996.

\bibitem[Doukhan and Le\'{o}n(1990)]{DouLeo90}
Doukhan, P. and Le\'{o}n, J.~R.
\newblock D\'{e}viation quadratique d'estimateurs de densit\'{e} par
  projections orthogonales.
\newblock \emph{C. R. Acad. Sci. Paris S\'{e}r. I Math.}, 310\penalty0
  (6):\penalty0 425--430, 1990.

\bibitem[Dudley(1969)]{Dud69}
Dudley, R.~M.
\newblock The speed of mean {G}livenko-{C}antelli convergence.
\newblock \emph{Ann. Math. Statist}, 40:\penalty0 40--50, 1969.

\bibitem[Dvurechensky et~al.(2018)Dvurechensky, Gasnikov, and
  Kroshnin]{DvuGasKro18}
Dvurechensky, P., Gasnikov, A., and Kroshnin, A.
\newblock Computational optimal transport: Complexity by accelerated gradient
  descent is better than by sinkhorn's algorithm.
\newblock \emph{arXiv preprint arXiv:1802.04367}, 2018.

\bibitem[Efron and Tibshirani(1993)]{EfrTib93}
Efron, B. and Tibshirani, R.~J.
\newblock \emph{An introduction to the bootstrap}, volume~57 of
  \emph{Monographs on Statistics and Applied Probability}.
\newblock Chapman and Hall, New York, 1993.

\bibitem[Fournier and Guillin(2015)]{FouGui15}
Fournier, N. and Guillin, A.
\newblock On the rate of convergence in {W}asserstein distance of the empirical
  measure.
\newblock \emph{Probab. Theory Related Fields}, 162\penalty0 (3-4):\penalty0
  707--738, 2015.

\bibitem[Genevay et~al.(2018)Genevay, Chizat, Bach, Cuturi, and
  Peyr{\'e}]{GenChiBac18}
Genevay, A., Chizat, L., Bach, F., Cuturi, M., and Peyr{\'e}, G.
\newblock Sample complexity of {S}inkhorn divergences.
\newblock \emph{arXiv preprint arXiv:1810.02733}, 2018.

\bibitem[Hagerup and R\"{u}b(1990)]{HagRub90}
Hagerup, T. and R\"{u}b, C.
\newblock A guided tour of {C}hernoff bounds.
\newblock \emph{Inform. Process. Lett.}, 33\penalty0 (6):\penalty0 305--308,
  1990.

\bibitem[H\"{a}rdle et~al.(1998)H\"{a}rdle, Kerkyacharian, Picard, and
  Tsybakov]{HarKerPic98}
H\"{a}rdle, W., Kerkyacharian, G., Picard, D., and Tsybakov, A.
\newblock \emph{Wavelets, approximation, and statistical applications}, volume
  129 of \emph{Lecture Notes in Statistics}.
\newblock Springer-Verlag, New York, 1998.

\bibitem[Kantorovitch(1942)]{Kan42}
Kantorovitch, L.
\newblock On the translocation of masses.
\newblock \emph{C. R. (Doklady) Acad. Sci. URSS (N.S.)}, 37:\penalty0 199--201,
  1942.

\bibitem[Kerkyacharian and Picard(1992)]{KerPic92}
Kerkyacharian, G. and Picard, D.
\newblock Density estimation in {B}esov spaces.
\newblock \emph{Statist. Probab. Lett.}, 13\penalty0 (1):\penalty0 15--24,
  1992.

\bibitem[Kloeckner(2018)]{Klo18}
Kloeckner, B.
\newblock Empirical measures: regularity is a counter-curse to dimensionality.
\newblock \emph{arXiv preprint arXiv:1802.04038}, 2018.

\bibitem[Kronmal and Peterson(1979)]{KroPet79}
Kronmal, R.~A. and Peterson, Jr., A.~V.
\newblock On the alias method for generating random variables from a discrete
  distribution.
\newblock \emph{Amer. Statist.}, 33\penalty0 (4):\penalty0 214--218, 1979.

\bibitem[LeCam(1973)]{Lec73}
LeCam, L.
\newblock Convergence of estimates under dimensionality restrictions.
\newblock \emph{Ann. Statist.}, 1:\penalty0 38--53, 1973.

\bibitem[Ledoux(2017)]{Led17}
Ledoux, M.
\newblock On optimal matching of {G}aussian samples.
\newblock \emph{Zap. Nauchn. Sem. S.-Peterburg. Otdel. Mat. Inst. Steklov.
  (POMI)}, 457\penalty0 (Veroyatnost' i Statistika. 25):\penalty0 226--264,
  2017.

\bibitem[Leeb and Coifman(2015)]{LeeCoi15}
Leeb, W. and Coifman, R.
\newblock H{\"o}lder--lipschitz norms and their duals on spaces with
  semigroups, with applications to earth mover's distance.
\newblock \emph{Journal of Fourier Analysis and Applications}, 22\penalty0
  (4):\penalty0 910--953, Nov 2015.

\bibitem[Liang(2017)]{Lia17}
Liang, T.
\newblock How well can generative adversarial networks ({GAN}) learn densities:
  A nonparametric view.
\newblock \emph{arXiv preprint arXiv:1712.08244}, 2017.

\bibitem[Loeper(2006)]{Loe06}
Loeper, G.
\newblock Uniqueness of the solution to the {V}lasov-{P}oisson system with
  bounded density.
\newblock \emph{J. Math. Pures Appl. (9)}, 86\penalty0 (1):\penalty0 68--79,
  2006.

\bibitem[Maury et~al.(2010)Maury, Roudneff-Chupin, and
  Santambrogio]{MauRouSan10}
Maury, B., Roudneff-Chupin, A., and Santambrogio, F.
\newblock A macroscopic crowd motion model of gradient flow type.
\newblock \emph{Math. Models Methods Appl. Sci.}, 20\penalty0 (10):\penalty0
  1787--1821, 2010.

\bibitem[Mena and Niles-Weed(2019)]{MenNil19}
Mena, G. and Niles-Weed, J.
\newblock Statistical bounds for entropic optimal transport: sample complexity
  and the central limit theorem.
\newblock In \emph{Advances in Neural Information Processing Systems 32
  (NeurIPS 2019)}, 12 2019.

\bibitem[Meyer(1990)]{Mey90}
Meyer, Y.
\newblock \emph{Ondelettes et op\'{e}rateurs. {I}}.
\newblock Actualit\'{e}s Math\'{e}matiques. [Current Mathematical Topics].
  Hermann, Paris, 1990.
\newblock Ondelettes. [Wavelets].

\bibitem[Monge(1781)]{Mon81}
Monge, G.
\newblock M{\'e}moire sur la th{\'e}orie des d{\'e}blais et des remblais.
\newblock \emph{Histoire de l'Acad{\'e}mie royale des sciences}, 1:\penalty0
  666--704, 1781.

\bibitem[Moser(1965)]{Mos65}
Moser, J.
\newblock On the volume elements on a manifold.
\newblock \emph{Trans. Amer. Math. Soc.}, 120:\penalty0 286--294, 1965.

\bibitem[M{\"u}ller(1997)]{Mul97}
M{\"u}ller, A.
\newblock Integral probability metrics and their generating classes of
  functions.
\newblock \emph{Adv. in Appl. Probab.}, 29\penalty0 (2):\penalty0 429--443,
  1997.

\bibitem[Peyr{\'e} and Cuturi(2017)]{PeyCut17}
Peyr{\'e}, G. and Cuturi, M.
\newblock Computational optimal transport.
\newblock Technical report, 2017.

\bibitem[Peyre(2018)]{Pey18}
Peyre, R.
\newblock Comparison between ${W}_2$ distance and $\dot{H}^{−1}$ norm, and
  localization of {W}asserstein distance.
\newblock \emph{ESAIM: Control, Optimisation and Calculus of Variations}, 2018.

\bibitem[Rabin et~al.(2011)Rabin, Peyr{\'e}, Delon, and Bernot]{RabPeyDel11}
Rabin, J., Peyr{\'e}, G., Delon, J., and Bernot, M.
\newblock Wasserstein barycenter and its application to texture mixing.
\newblock In \emph{International Conference on Scale Space and Variational
  Methods in Computer Vision}, pages 435--446. Springer, 2011.

\bibitem[Rosenthal(1972)]{Ros72}
Rosenthal, H.~P.
\newblock On the span in {$L\sp{p}$} of sequences of independent random
  variables. {II}.
\newblock pages 149--167, 1972.

\bibitem[Shirdhonkar and Jacobs(2008)]{ShiJac08}
Shirdhonkar, S. and Jacobs, D.~W.
\newblock Approximate earth mover's distance in linear time.
\newblock In \emph{2008 {IEEE} Computer Society Conference on Computer Vision
  and Pattern Recognition {(CVPR} 2008), 24-26 June 2008, Anchorage, Alaska,
  {USA}}, 2008.

\bibitem[Singh and P{\'o}czos(2018)]{SinPoc18}
Singh, S. and P{\'o}czos, B.
\newblock Minimax distribution estimation in wasserstein distance.
\newblock \emph{arXiv preprint arXiv:1802.08855}, 2018.

\bibitem[Singh et~al.(2018)Singh, Uppal, Li, Li, Zaheer, and
  P{\'o}czos]{SinUppLi18}
Singh, S., Uppal, A., Li, B., Li, C.-L., Zaheer, M., and P{\'o}czos, B.
\newblock Nonparametric density estimation under adversarial losses.
\newblock \emph{arXiv preprint arXiv:1805.08836}, 2018.

\bibitem[Triebel(2010)]{Tri10}
Triebel, H.
\newblock \emph{Bases in function spaces, sampling, discrepancy, numerical
  integration}, volume~11 of \emph{EMS Tracts in Mathematics}.
\newblock European Mathematical Society (EMS), Z\"{u}rich, 2010.

\bibitem[Tsybakov(2009)]{Tsy09}
Tsybakov, A.~B.
\newblock \emph{Introduction to nonparametric estimation}.
\newblock Springer Series in Statistics. Springer, New York, 2009.
\newblock Revised and extended from the 2004 French original, Translated by
  Vladimir Zaiats.

\bibitem[Uppal et~al.(2019)Uppal, Singh, and P{\'o}czos]{UppSinPoc19}
Uppal, A., Singh, S., and P{\'o}czos, B.
\newblock Nonparametric density estimation \& convergence rates for gans under
  besov ipm losses.
\newblock 02 2019.

\bibitem[Villani(2008)]{Vil08}
Villani, C.
\newblock \emph{Optimal transport: old and new}, volume 338.
\newblock Springer Science \& Business Media, 2008.

\bibitem[Walter(1992)]{Wal92}
Walter, G.~G.
\newblock Approximation of the delta function by wavelets.
\newblock \emph{J. Approx. Theory}, 71\penalty0 (3):\penalty0 329--343, 1992.

\bibitem[Weed and Bach(2018)]{WeeBac18}
Weed, J. and Bach, F.
\newblock Sharp asymptotic and finite-sample rates of convergence of empirical
  measures in {W}asserstein distance.
\newblock \emph{Bernoulli}, 2018.
\newblock To appear.

\end{thebibliography}

\end{document}